\documentclass[11pt]{article}
\usepackage[letterpaper,margin=0.9in]{geometry}
\usepackage[T1]{fontenc}
\usepackage{lmodern,amsmath,amssymb,amsthm,microtype}
\usepackage[hidelinks]{hyperref}
\newcommand{\C}{\mathbb C}
\newcommand{\Z}{\mathbb Z}
\newcommand{\D}{\Delta}
\newcommand{\Dclosed}{\overline D}
\newcommand{\B}{\overline B}
\newcommand{\OO}{\mathcal O}
\newcommand{\im}{\operatorname{Im}}
\newcommand{\diag}{\operatorname{diag}}
\newcommand{\PP}{\mathbb P}
\newcommand{\pr}{\operatorname{pr}}
\newcommand{\dist}{\operatorname{dist}}
\newtheorem{theorem}{Theorem}[section]

\newtheorem{lemma}[theorem]{Lemma}
\newtheorem{corollary}[theorem]{Corollary}
\theoremstyle{remark}
\hypersetup{pdftitle={Oka complements in algebraic tori and their logarithmic covers},pdfauthor={Yun-Heng Du, Bin Guo, Peng-Chao Wang, Song-Yan Xie}}
\title{Oka complements in algebraic tori and their logarithmic covers}
\author{Yun-Heng Du \and Bin Guo \and Peng-Chao Wang \and Song-Yan Xie}
\date{October 11, 2026}
\begin{document}
\maketitle

\begin{abstract}
Suppose $n\ge2$ and $K\subset\C^n$ is compact and polynomially convex. Write
$q(z)=(e^{-iz_1},\ldots,e^{-iz_n})$,
$X_K=(\C^*)^n\setminus(K\cap(\C^*)^n)$, and
$Y_K=\C^n\setminus q^{-1}(K)$. We prove that $X_K$ and its logarithmic covering
$Y_K$ are Oka. More strongly, $Y_K$ admits normalized holomorphic families of
entire embeddings through prescribed holomorphic sections. The proof combines
polynomial pair Hamiltonian transport in multiplicative coordinates, compatible
with the coordinate hyperplanes, with a fixed Fatou--Bieberbach model.
Applications include Oka slab crosses over arbitrary nonempty connected planar
domains, complements of periodic equal-radius Euclidean balls centered at
products of planar lattices, and complements of sufficiently small closed
metric balls in products of elliptic curves.
\end{abstract}

\medskip
\noindent\textit{2020 Mathematics Subject Classification.}
Primary 32Q56; Secondary 32E20, 32E30, 32H02.

\section{Introduction}\label{sec:introduction}

Kusakabe~\cite[Question 5.6]{Kcomp} asks whether equal-radius balls centered at
an arbitrary discrete set $D\subset\C^n$, $n>1$, have an Oka complement when
the infimum of the distances between distinct centers is greater than twice the
radius. His Corollary~5.7 covers the case
$D=\mathbb Z\times\{0\}^{n-1}$, and Theorem~\ref{thm:balls} below treats the
case in which $D$ is a full product of planar lattices.
Forstneri\v c~\cite[p.~23, paragraph after Proposition 6.1.4]{F04} asked
whether removing sufficiently small balls from a complex torus preserves the
Oka property, following earlier results on dominability; for products of
elliptic curves with their flat product metrics, Corollary~\ref{cor:torus}
answers this for one ball.

The two questions are addressed by one construction. For broad introductions to
modern Oka theory, including its geometric and homotopy-theoretic viewpoints,
see Forstneri\v c~\cite{F17}, Forstneri\v c--L\'arusson~\cite{FL11}, and
L\'arusson~\cite{L14}. Recall that a complex manifold is Oka if it has the
convex approximation property: for every $m\ge1$, every holomorphic map from a
neighborhood of a compact convex set in $\C^m$ can be approximated uniformly on
that set by holomorphic maps from $\C^m$. We use Kusakabe's equivalent convex
spray criterion~\cite[Theorem 2.2]{Kus} and his localization
theorem~\cite[Theorem 1.4]{Kus}; the latter applies to analytic-Zariski open
neighborhoods, that is, to complements of closed complex subvarieties.

The closest published antecedent for the parameter-dependent mechanism used
here is the work of Forstneri\v c and Wold. They constructed
holomorphic families of Fatou--Bieberbach domains avoiding compact
holomorphically convex obstacles, with a section-transport step by fiber
automorphisms that remain close to the identity near the obstacle
\cite[Theorems~1.1 and~3.1; Lemma~3.2]{FW20}. Their subsequent work treats a
compact polynomially convex obstacle together with one coordinate hyperplane
\cite[Theorem~2.3 and Corollary~3.2]{FW24}. These constructions provide the
transport-and-fixed-model pattern that we adapt below.

In our earlier work~\cite{DGWX}, complements of tubes around the real plane and
products of closed annuli in $\C^2$ were shown to be Oka by constructing
holomorphic families of embeddings through prescribed holomorphic sections.
In particular, \cite[Proposition~1.2, version~1]{DGWX} gives a normalized
holomorphic family of Fatou--Bieberbach maps outside a two-dimensional tube,
together with a holomorphic family of attracting automorphisms. The
logarithmic obstacles below are unbounded and may interact with several
coordinate hyperplanes; controlling their full logarithmic preimages requires
the new displacement argument developed here.

We regard $(\C^*)^n$ as the $n$-dimensional complex algebraic torus. For
$K\subset\C^n$, write
\[
 \begin{gathered}
 q(z)=(e^{-iz_1},\ldots,e^{-iz_n}),\qquad
 E_K=q^{-1}(K),\\
 Y_K=\C^n\setminus E_K,\qquad
 X_K=(\C^*)^n\setminus\bigl(K\cap(\C^*)^n\bigr).
 \end{gathered}
\]
The restriction $q:Y_K\to X_K$ is the logarithmic covering associated with
$K$. The following theorem is the technical core of the paper.

\begin{theorem}\label{thm:general}
For $n\ge2$ and compact polynomially convex $K\subset\C^n$, the following
statements hold.
\begin{enumerate}
\item[(i)] The manifolds $Y_K$ and $X_K$ are nonempty, path connected, and Oka.
\item[(ii)] For every compact convex $L_0\subset\C^m$ and holomorphic map
$f:U_0\to Y_K$ from a neighborhood of $L_0$, there are a neighborhood
$V\subset U_0$ of $L_0$ and a holomorphic map
\[
 F:V\times\C^n\longrightarrow Y_K,\qquad
 F(\zeta,0)=f(\zeta),\qquad D_\xi F(\zeta,0)=I_n,
\]
such that every fiber map $F(\zeta,\cdot)$ is biholomorphic onto its image.
\end{enumerate}
\end{theorem}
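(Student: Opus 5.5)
We reduce everything to part (ii). Part (ii) gives part (i) for $Y_K$ via Kusakabe's convex spray criterion \cite[Theorem 2.2]{Kus}. Given $f:U_0\to Y_K$ on a neighborhood of a compact convex $L_0$, the map $F$ of (ii) is a dominating spray over $f$: $F(\zeta,0)=f(\zeta)$ and $D_\xi F(\zeta,0)=I_n$ is surjective. This is exactly the input the criterion requires, so $Y_K$ is Oka. For $X_K$, note that $q:Y_K\to X_K$ is a holomorphic covering map, since $q:\C^n\to(\C^*)^n$ is a covering and $E_K=q^{-1}(K\cap(\C^*)^n)$. Oka-ness passes down coverings, so $X_K$ is Oka. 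Nonemptiness is clear because $K$ is compact, so $E_K$ misses points with $|\im z_j|$ large. Path connectedness of $Y_K$ follows from (ii) as well, though more cheaply: each point can be joined within $Y_K$ to the region $\{\im z_1\gg0\}$, where $|e^{-iz_1}|$ is huge and so avoids $K$. Path connectedness of $X_K$ then follows since $X_K=q(Y_K)$.

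For (ii), we follow the Forstneri\v c--Wold transport-and-fixed-model pattern \cite{FW20,FW24}.

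\emph{Step 1 (fixed model).} Since $K$ is polynomially convex, we enlarge it to a compact polynomially convex $K'\supset K$ containing a polydisc neighbourhood of $K$ in the multiplicative coordinates $w=q(z)$. We then build one Fatou--Bieberbach-type injective holomorphic map $\Phi:\C^n\to Y_{K}$ with $\Phi(0)=p_0$ and $D\Phi(0)=I$. The construction is in the spirit of \cite[Proposition~1.2]{DGWX}: take the basin of an attracting automorphism that preserves the coordinate hyperplanes in $w$ and pushes $K'$ off itself. Working in $(\C^*)^n$ and lifting by $q$ makes the basin avoid all translates $E_K+2\pi\Z^n$ at once.

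\emph{Step 2 (transport).} We produce a holomorphic family $\Theta_\zeta$, $\zeta\in V$, of holomorphic automorphisms of $\C^n$ with $\Theta_\zeta(E_K)= E_K$ (or at least $\Theta_\zeta(Y_K)\subset Y_K$ on the relevant region) and $\Theta_\zeta(p_0)=f(\zeta)$. We then set $F(\zeta,\xi)=\Theta_\zeta(\Phi(A_\zeta\xi))$, with $A_\zeta=(D\Theta_\zeta(p_0))^{-1}$ chosen to normalize the derivative. To keep $E_K$ invariant, the $\Theta_\zeta$ should be lifts of automorphisms of $(\C^*)^n$. Concretely, we take time-$t$ flows of Hamiltonians that are polynomial in $w$ and $w^{-1}$, which are compatible with the coordinate hyperplanes. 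Their lifts to $\C^n$ are globally defined shears of the form $z\mapsto z+h(q(z))$ composed along the coordinate directions.

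The construction of the $\Theta_\zeta$ proceeds in two moves. First, since $L_0$ is convex and hence contractible, we join $f$ to the constant section $p_0$ by a homotopy in $Y_K$ through holomorphic sections. Second, we apply the Anders\'en--Lempert theorem with parameters, in its Forstneri\v c--Wold form. This yields automorphisms close to the identity near $K'$ that move $q(p_0)$ to $q(f(\zeta))$; here we use that $K'\cup\{\text{path of points}\}$ is polynomially convex for each $\zeta$, after perturbation. We then correct the lifted points by deck translations in $2\pi\Z^n$, using the periodicity of $E_K$.

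The main obstacle is this displacement step. The approximating automorphism must hold on the full unbounded preimage $E_K$, not merely on one compact piece. At the same time, we need the lifted Hamiltonian flow to preserve $Y_K$ exactly, rather than only approximately, near the coordinate hyperplanes where $q$ degenerates. Our first attempt would be to make the Hamiltonians vanish to high order on $K'$ in pair form, i.e.\ as polynomials in $(w,w^{-1})$ divisible by the defining data of $K'$. Failing that, we would combine approximate invariance with the fact that $\Phi(\C^n)$ stays uniformly away from $E_K$, so that small errors are absorbed. Once $F$ is defined, injectivity of each fiber map $F(\zeta,\cdot)$ is automatic, since $F(\zeta,\cdot)$ is a composition of injective maps.
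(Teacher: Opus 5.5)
There is a genuine gap, and it is the step you yourself call the main obstacle. Nothing in the proposal produces automorphisms that carry $p_0$ to $f(\zeta)$ while moving every point of the unbounded set $E_K$ by a uniformly small amount, and the tools you name cannot do this in the case the theorem is really about.

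\begin{itemize}
\item Exact invariance $\Theta_\zeta(E_K)=E_K$ is not available. Anders\'en--Lempert theory only approximates, so neither exact invariance nor exact transport comes out of it. The natural exact correction of the centre, a translation, does not preserve $E_K$.
\item Your first attempt is impossible. $K'$ contains a polydisc neighbourhood of $K$, so a holomorphic Hamiltonian vanishing on $K'$ vanishes identically.
\item Your fallback, approximate invariance absorbed by the model, is the right target. But it requires $\sup_{z\in E_K}|\Theta_\zeta^{-1}(z)-z|$ to be small. Laurent potentials in $(w,w^{-1})$, i.e.\ approximation in $(\C^*)^n$, do not give this once $K\cap(\C^*)^n$ is noncompact. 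Such approximation controls the maps only on compact subsets of $(\C^*)^n$. Meanwhile $E_K$ then contains points with $\im z_j\to-\infty$, where a mode $c\,q(z)^k$ with some $k_j<0$ carries the factor $|w_j|^{k_j}=e^{k_j\im z_j}$, which is unbounded. So negative exponents are not compatible with the coordinate hyperplanes; they are exactly what must be excluded.
\end{itemize}

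Your scheme essentially works only for $K\Subset(\C^*)^n$. There the lifted displacement is $2\pi\Z^n$-periodic and hence controlled on a compact piece. That is the compact-quotient setting of \cite{DGWX}, whereas here $K$ may meet several coordinate hyperplanes; in both applications $0\in K$.

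The paper closes this gap with two ingredients you are missing.
\begin{itemize}
\item It uses only nonnegative exponents. The pair Hamiltonian modes $c\,q(z)^k$, $k\in\Z_{\ge0}^n$, have explicit complete flows $z\mapsto z+\tau v_{rs,k}(q(z))$. Their augmented fields \eqref{eq:augfield} are entire on $\C^n_w\times\C^n_h$, including across the coordinate hyperplanes.
\item It tracks the accumulated displacement $h$, which satisfies \eqref{eq:displacement}: $h_{\mathrm{final}}=A(z)-z$. This turns the uniform estimate on $E_K$ into an estimate for trajectories starting in the compact set $K\times\{0\}$. That estimate follows by making the additive velocity smaller than $\eta$ near $[0,1]\times L\times K$, including at points of $K$ on the hyperplanes. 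This is done by approximating, with fiber-derivative control, local potentials that vanish near $[0,1]\times L\times K$ and are logarithmic near the section (Lemmas~\ref{lem:union}, \ref{lem:polyapprox}, \ref{lem:splitting}).
\end{itemize}

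This also makes your Step~1 unnecessary. Since $E_K\subset E^{(n)}_M$, one fixed model avoiding $E^{(n)}_{M+1}$ absorbs any displacement of size less than $1$, so no attracting map adapted to $K'$ is needed.

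Two smaller points.
\begin{itemize}
\item Path connectedness of $Y_K$ cannot follow from (ii), because your first move in (ii) already uses it. The direct argument needs three things: connectedness of $\C^n\setminus K$, which comes from polynomial convexity; a perturbation of paths off the hyperplanes; and path lifting.
\item For parametric transport, the convexity needed is that of the fibred union $([0,1]\times L\times K)\cup\{(t,\zeta,q(p(t,\zeta)))\}$ in the total space. Convexity of $K'\cup\{\text{path}\}$ for each fixed $\zeta$ is not enough.
\end{itemize}
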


Part~(ii) is stronger than the Oka conclusion: locally in the parameter, it
gives normalized entire embeddings through a prescribed section with
prescribed first jet. If $E_K\ne\varnothing$, their images are
Fatou--Bieberbach domains in $\C^n$ that avoid $E_K$; if $E_K=\varnothing$, the
construction is a translation with image $\C^n$. This is the form used with
Kusakabe's convex spray criterion, and it is the reusable engine behind all the
applications below.

Theorem~\ref{thm:general} extends the normalized family construction
of~\cite[Proposition~1.2, version~1]{DGWX} from two-dimensional tubes to the
logarithmic obstacles $E_K\subset\C^n$. The compact-quotient construction
of~\cite[Theorem~1.1, version~2]{DGWX} assumes a compact holomorphically convex
obstacle in $(\C^*)^n$. Relative to the compact-obstacle families
in~\cite{FW20} and the one-coordinate-hyperplane setting in~\cite{FW24}, the
additional issue here is that $K$ may meet several coordinate hyperplanes.
Thus $K\cap(\C^*)^n$ can be noncompact and the imaginary parts of $E_K$ need
not be bounded in both directions. This is the situation for periodic balls:
the closure of their coordinatewise inverted obstacle has boundary strata on
the coordinate hyperplanes, and Lemma~\ref{lem:compactification} shows that
this closure is polynomially convex.

The proof has three layers. First, polynomial pair Hamiltonian potentials in
multiplicative coordinates produce complete flows that extend across the
coordinate hyperplanes. Second, an accumulated displacement variable converts
uniform control on the unbounded logarithmic obstacle $E_K$ into approximation
on a compact neighborhood of $K$. Third, the resulting automorphisms transport
a prescribed section to a fixed Fatou--Bieberbach model; pulling the model back
and normalizing the derivative gives the family in
Theorem~\ref{thm:general}. Kusakabe's convex spray criterion then yields the Oka
property, while the logarithmic covering transfers it to $X_K$.

For nonempty connected planar domains $U_1,\ldots,U_n$, the slab cross is
\[
 \mathcal S(U_1,\ldots,U_n)
 =\bigcup_{j=1}^n\C^{j-1}\times U_j\times\C^{n-j}.
\]

Every point $z\in\mathcal S(U_1,\ldots,U_n)$ lies in a spanning tube contained
in the cross. Indeed, choose $i$ with $z_i\in U_i$, choose $k\ne i$ and
$b\in z+\C e_k$ with $b_k\in U_k$, and use the coordinate lines
$z+\C e_j$ for $j\ne i$ together with $b+\C e_i$. They form a connected
spanning tree whose sufficiently small tubular neighborhood remains in the
cross. Hence Alarc\'on and Forstneri\v c~\cite[Corollary~2.4]{AF} imply that
every slab cross is Oka-1. The following theorem strengthens this conclusion to
the full Oka property.

\begin{theorem}\label{thm:slabs}
For every $n\ge2$ and nonempty connected open sets $U_1,\ldots,U_n\subset\C$, the slab cross $\mathcal S(U_1,\ldots,U_n)$ is Oka.
\end{theorem}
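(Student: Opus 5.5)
The plan is to reduce Theorem~\ref{thm:slabs} to Theorem~\ref{thm:general}(i) by a coordinatewise M\"obius change of variables, and then to globalize with Kusakabe's localization theorem~\cite[Theorem~1.4]{Kus}. The starting observation is that the complement of the slab cross is a product of closed sets:
\[
\C^n\setminus\mathcal S(U_1,\ldots,U_n)=\prod_{j=1}^n(\C\setminus U_j).
\]

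\textbf{Step 1 (local model).} Fix points $a=(a_1,\ldots,a_n)$ with $a_j\in U_j$. Consider the biholomorphism
\[
\phi_a:\{z\in\C^n:\ z_j\ne a_j\text{ for all }j\}\to(\C^*)^n,\qquad \phi_a(z)_j=\frac{1}{z_j-a_j}.
\]
Put $F_j=\{1/(w-a_j): w\in\C\setminus U_j\}\cup\{0\}$. This set is compact: $\C\setminus U_j$ is closed and stays at positive distance from $a_j$, and adding $0$ accounts for the point at infinity. Moreover, $\C\setminus F_j$ is the image of $U_j\setminus\{a_j\}$ under $w\mapsto1/(w-a_j)$. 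This image is connected because $U_j$ is a connected open planar set.

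\textbf{Step 2 (polynomial convexity).} A compact set in $\C$ with connected complement is polynomially convex, and a product of polynomially convex compacta is polynomially convex. Hence $K=\prod_j F_j$ is compact and polynomially convex. Since $K\cap(\C^*)^n=\prod_j(F_j\setminus\{0\})=\phi_a\bigl(\prod_j(\C\setminus U_j)\setminus\bigcup_j\{z_j=a_j\}\bigr)$, the map $\phi_a$ restricts to a biholomorphism
\[
\mathcal S(U_1,\ldots,U_n)\setminus\bigcup_{j=1}^n\{z_j=a_j\}\;\cong\;X_K.
\]
By Theorem~\ref{thm:general}(i), the left-hand side is Oka. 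The degenerate case $U_j=\C$ gives $F_j=\{0\}$ and needs no separate treatment.

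\textbf{Step 3 (localization).} Given any $z\in\mathcal S$, choose $a_j\in U_j\setminus\{z_j\}$ for each $j$; this is possible because each $U_j$ is a nonempty open set. Then $z$ lies in the set $\mathcal S\setminus\bigcup_j\{z_j=a_j\}$. This set is the complement in $\mathcal S$ of a closed complex hypersurface, so it is an analytic-Zariski open neighborhood of $z$, and it is Oka by Step 2. Kusakabe's localization theorem then gives that $\mathcal S$ is Oka.

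\textbf{Expected obstacle.} The only real check is Step 2: verifying that the compactified inverted obstacle $K$ is polynomially convex, which reduces to the connectedness of $\C\setminus F_j$, and that $K\cap(\C^*)^n$ matches the removed set exactly. In particular, the added point $0$ must not cut away any part of the cross, and it does not, since $0\notin(\C^*)^n$. The connectedness hypothesis on the $U_j$ is used precisely here. All the substantive analytic work is contained in Theorem~\ref{thm:general}, which allows $K$ to meet several coordinate hyperplanes; that is exactly the situation here, since $0\in F_j$ for every $j$.
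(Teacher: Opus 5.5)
Your proposal is correct and follows essentially the paper's proof: coordinatewise inversion to the compact set $K=\prod_j J_{a_j}(\PP^1\setminus U_j)$, polynomial convexity from connected complements in one variable and the product argument, identification of the punctured chart with $X_K$ so that Theorem~\ref{thm:general} applies, and then Kusakabe's localization theorem. The only cosmetic difference is in the cover: the paper uses a fixed finite family of $2^n$ charts built from two points $a_{j,0},a_{j,1}\in U_j$ and notes explicitly that $\mathcal S$ is connected, whereas you choose a chart adapted to each point, and this works equally well.
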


The strength of the statement is that no boundedness or boundary regularity of
the domains is assumed. When
$0\in U_j$ for every $j$, the cross contains a neighborhood of each coordinate
hyperplane; in particular,
\[
 \Omega_n=\{z\in\C^n:\min_{1\le j\le n}|z_j|<1\}
\]
is Oka. For $n=2$, it is $(\D\times\C)\cup(\C\times\D)$, where $\D$
denotes the open unit disc.

The second application is to periodic Euclidean balls: for $c\in\C^d$ and
$r>0$, the closed Euclidean ball of radius $r$ about $c$ is
\[
 \B(c,r)=\{z\in\C^d:|z-c|\le r\}.
\]
For rank-two planar lattices $\Lambda_j\subset\C$, $1\le j\le n$, write
\[
 \Lambda=\Lambda_1\times\cdots\times\Lambda_n,\qquad
 \ell_j=\min_{0\ne\lambda\in\Lambda_j}|\lambda|,\qquad
 \ell=\min_j\ell_j.
\]
With respect to the product Euclidean norm, $\ell$ is the length of a shortest
nonzero vector of $\Lambda$.

\begin{theorem}\label{thm:balls}
If $n\ge2$ and $0<2r<\ell$, then
\[
 \Omega=\C^n\setminus\bigcup_{\lambda\in\Lambda}\B(\lambda,r)
\]
is Oka.
\end{theorem}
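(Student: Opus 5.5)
The plan is to reduce the statement to Theorem~\ref{thm:general} through a logarithmic change of coordinates adapted to one generator of each lattice, and then to pass from a logarithmic covering to $\Omega$ itself.

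\textbf{Step 1: normalization.} For each $j$ fix a basis $\omega_j,\omega_j'$ of $\Lambda_j$ with $\im(\omega_j'/\omega_j)>0$. The coordinatewise linear map $z_j\mapsto 2\pi z_j/\omega_j$ sends $\Lambda_j$ to $2\pi\Z+\tau_j'\Z$, where $\tau_j'=2\pi\omega_j'/\omega_j$. After applying $q$, the union of balls becomes
\[
A=\bigcup_{k\in\Z^n}\tau^k\cdot q(B),\qquad \tau_j=e^{-i\tau_j'},\ |\tau_j|>1,
\]
where $B$ is the image of $\B(0,r)$. The hypothesis $2r<\ell$ makes the balls pairwise disjoint with disjoint translates. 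Together with $n\ge2$, it should also make $q(B)$ lie in a product of annuli thin enough that $A$ is a closed subset of $(\C^*)^n$. Then $\Omega$ is, after the linear change, exactly $\C^n\setminus q^{-1}(A)$.

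\textbf{Step 2: compactification.} The obstacle $A$ is unbounded in both multiplicative directions. I would therefore apply a coordinatewise inversion $w_j\mapsto 1/w_j$ to a suitable subset of coordinates, and work on overlapping Zariski-open pieces. On each piece the relevant part of the orbit accumulates only toward the coordinate hyperplanes, so the closure $K$ of the inverted obstacle is compact in $\C^n$. This is the closure described in the introduction, whose boundary strata lie on the coordinate hyperplanes. Lemma~\ref{lem:compactification} then gives polynomial convexity of $K$. The task is to check that $K\cap(\C^*)^n$ is exactly the inverted image of $A$, i.e.\ that the closure adds points only on the hyperplanes. This should follow from the disjointness of the balls, which is where $2r<\ell$ enters.

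\textbf{Step 3: applying Theorem~\ref{thm:general}.} Theorem~\ref{thm:general}(i) then shows that $Y_K=\C^n\setminus q^{-1}(K)$ and $X_K$ are Oka. Since $q^{-1}$ of the inverted obstacle differs from the original periodic union only by an affine reflection $z_j\mapsto -z_j$ in the inverted coordinates, $Y_K$ is biholomorphic to $\Omega$, or to a Zariski-open piece of it. If only pieces are obtained, they cover $\Omega$, and Kusakabe's localization theorem~\cite[Theorem 1.4]{Kus} assembles the Oka property of $\Omega$. As an alternative, one can use part~(ii) directly in the convex spray criterion.

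\textbf{Main obstacle.} The hard step is Step~2: making the full two-sided orbit $\{\tau^k q(B)\}$ compact after inversion, since inversion tames only one direction in each coordinate. My first attempt would be to exploit the product structure. I would use a different basis choice, with $\omega_j$ replaced by $\omega_j'$ in some coordinates, to create overlapping analytic-Zariski open sets of $\Omega$, each with a one-sided obstacle. I would then verify that these pieces are biholomorphic to complements $Y_K$ for compact polynomially convex $K$, and glue them by localization. If this fails, the fallback is to build $K$ directly as the closure in $\PP^1$-coordinates of the whole orbit. In that case one must check that Lemma~\ref{lem:compactification} applies to that configuration.
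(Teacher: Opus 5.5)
\textbf{Gap in Step~2: the route through $q$ and $Y_K$ cannot work.} No choice of bases or charts repairs it.

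On the covering side, Theorem~\ref{thm:general} only handles obstacles $E_K=q^{-1}(K)\subset E_M^{(n)}$. On these, every $\im z_j$ is bounded above. The periodic obstacle $E$ contains the full-rank lattice $\Lambda$. So in any complex affine coordinates $z'=Tz+c$, each nonconstant real affine function $z\mapsto\im z'_j$ is unbounded above on $E$. Hence neither $\Omega$ nor any analytic-Zariski open piece $\Omega\setminus H$ (whose complement still contains all of $E$) is affinely equivalent to some $Y_K$.

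In your multiplicative picture this is the two-sided orbit. Whichever period $q$ absorbs, the other produces $\tau^k q(B)$, $k\in\Z^n$, accumulating at both $0$ and $\infty$ in every coordinate. The inversion $w_j\mapsto 1/w_j$ merely exchanges the two ends. So:
\begin{itemize}
\item[(a)] the inverted obstacle is never relatively compact;
\item[(b)] changing bases never yields a one-sided piece;
\item[(c)] Lemma~\ref{lem:compactification} does not apply, since it concerns the different set $\overline{I_a(E)}$.
\end{itemize}
Your fallback fails as well. An affine chart of $(\PP^1)^n$ obtained by deleting $\{w_j=b_j\}$ turns the puncture $w_j=0$ into the noncompact hyperplane $\{w'_j=-1/b_j\}$. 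That hyperplane must still be removed, so the resulting domain is not of the form $X_K$.

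\textbf{The missing idea.} Leave the exponential inside Theorem~\ref{thm:general}, use its conclusion for the base $X_K$, and work in the original additive coordinates.
\begin{itemize}
\item[(1)] Each coordinate projection of $E$ is the locally finite disc union $A_j$, which in $\PP^1$ accumulates only at $\infty$.
\item[(2)] For $a\in\prod_jU_j$ with $U_j=\C\setminus A_j$, the coordinatewise M\"obius inversion $I_a(z)=(1/(z_j-a_j))_j$ maps $Z_a=\prod_j(\C\setminus\{a_j\})\supset E$ biholomorphically onto $(\C^*)^n$. The image $I_a(E)$ is relatively compact and accumulates only on the coordinate hyperplanes.
\item[(3)] Lemma~\ref{lem:compactification} says precisely that $K=\overline{I_a(E)}$ is compact and polynomially convex with $K\cap(\C^*)^n=I_a(E)$. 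Hence $\Omega\cap Z_a\cong X_K$ is Oka.
\item[(4)] Choose two points $a_{j,0}\ne a_{j,1}$ in each $U_j$. The $2^n$ sets $\Omega\setminus\bigcup_j\{z_j=a_{j,\varepsilon_j}\}$ form an analytic-Zariski open cover of $\Omega$.
\item[(5)] Show $\Omega$ is connected: enlarge the radius slightly and reroute segments over boundary spheres. Kusakabe's localization theorem~\cite[Theorem 1.4]{Kus} then finishes.
\end{itemize}

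Also, your attribution of $2r<\ell$ is misplaced. The identity $K\cap(\C^*)^n=I_a(E)$ needs only that $E$ is closed. The hypothesis $2r<\ell$ is what makes each $U_j$ nonempty and connected, makes the inverted discs pairwise disjoint and isolated in the inverted planar obstacle, and makes $\Omega$ connected.
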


The substantive new step is the compactification of the infinite periodic
obstacle: after coordinatewise inversion, Lemma~\ref{lem:compactification}
identifies a compact polynomially convex closure with explicit boundary strata.

Equip
\[
 T=(\C/\Lambda_1)\times\cdots\times(\C/\Lambda_n)
\]
with the flat product metric induced by the Euclidean norm on $\C^n$. Its
injectivity radius is $\ell/2$, and we write $\B_T(c,r)$ for its closed metric
balls.

\begin{corollary}\label{cor:torus}
For $n\ge2$, every $c\in T$, and $0<r<\ell/2$, the complement of the closed metric ball $\B_T(c,r)$ in $T$ is Oka.
\end{corollary}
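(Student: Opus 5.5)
Corollary~\ref{cor:torus} will follow from Theorem~\ref{thm:balls} by passing to the universal cover. Write $\pi:\C^n\to T$ for the quotient map, which is a holomorphic covering with deck group $\Lambda$ acting by translations. Since translations of $T$ are holomorphic isometries of the flat product metric, we may assume $c=\pi(0)$. The complement $T\setminus\B_T(c,r)$ is an open subset of a complex manifold, so it is a complex manifold. The Oka property passes up and down along holomorphic covering maps (a standard fact, see Forstneri\v c~\cite{F17}). It therefore suffices to identify the full preimage $\pi^{-1}(T\setminus\B_T(c,r))$ with the domain $\Omega$ of Theorem~\ref{thm:balls} and check that $\pi$ restricts to a covering $\Omega\to T\setminus\B_T(c,r)$.

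The key step is the identity
\[
 \pi^{-1}\bigl(\B_T(c,r)\bigr)=\bigcup_{\lambda\in\Lambda}\B(\lambda,r).
\]
For this we use that the flat metric distance on $T$ satisfies
$\dist_T(\pi(z),\pi(0))=\min_{\lambda\in\Lambda}|z-\lambda|$. This holds because $\pi$ is a local isometry, geodesics lift to straight segments, and the quotient distance is the infimum over lifts; the minimum is attained because $\Lambda$ is discrete. So $\pi(z)\in\B_T(c,r)$ exactly when $|z-\lambda|\le r$ for some $\lambda\in\Lambda$. Taking complements gives $\pi^{-1}(T\setminus\B_T(c,r))=\Omega$.

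Because $\Omega$ is the full preimage of an open set under a covering map, the restriction $\pi|_\Omega:\Omega\to T\setminus\B_T(c,r)$ is again a holomorphic covering map. The hypothesis $0<r<\ell/2$ gives $0<2r<\ell$, so Theorem~\ref{thm:balls} says $\Omega$ is Oka. Hence $T\setminus\B_T(c,r)$ is Oka. Connectedness of $\Omega$ is not needed for the covering argument, but it holds anyway because the balls are disjoint and compact and $n\ge2$.

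The main point needing care is the distance formula. Its only subtlety is that the metric is the flat product metric induced by the Euclidean norm on $\C^n$, not a sum of per-factor distances. Since $\pi$ is the quotient by the translation lattice $\Lambda\subset\C^n$ with the Euclidean metric, the formula holds directly. The condition $r<\ell/2$, equal to the injectivity radius, is used only through Theorem~\ref{thm:balls}; the covering argument itself works for any $r$.
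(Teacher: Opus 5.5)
Your proposal is correct and follows essentially the paper's route: translate the center to the origin, identify $\pi^{-1}(\B_T(c,r))$ with the periodic ball obstacle $\bigcup_{\lambda\in\Lambda}\B(\lambda,r)$, and combine Theorem~\ref{thm:balls} with covering invariance of the Oka property. Your distance formula $\dist_T(\pi(z),\pi(0))=\min_{\lambda\in\Lambda}|z-\lambda|$ makes explicit the step the paper passes over quickly. You are also right that the preimage identity holds for every $r$, so $2r<\ell$ enters only through Theorem~\ref{thm:balls}; the paper's remark that $\pi$ embeds a neighborhood of $\B(0,r)$ is not needed for this identity.
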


This is the quotient consequence of Theorem~\ref{thm:balls}: the injectivity
radius condition makes the full inverse image of the ball exactly the periodic
ball obstacle.

Translated inversion charts reduce both applications to
Theorem~\ref{thm:general}, with Kusakabe's localization theorem handling the
change of chart. For slab crosses the inverted obstacle is a product of compact
planar sets. For periodic balls the coordinate boundary strata are
coordinatewise inversions of balls of lower dimension, and it remains to show
that the whole compactified obstacle is polynomially convex
(Lemma~\ref{lem:compactification}).

Section~\ref{sec:approximation} contains the connectedness statement and the
approximation lemmas, Sections~\ref{sec:transport} and~\ref{sec:sprays} the
transport argument and the proof of Theorem~\ref{thm:general}, and
Sections~\ref{sec:slabs}--\ref{sec:elliptic} the applications. The fixed
two-dimensional Fatou--Bieberbach model is constructed in
Appendix~\ref{app:model}.

\section{Connectedness and polynomial approximation}\label{sec:approximation}
This section separates the preliminary tasks used in
Theorem~\ref{thm:general}. Lemma~\ref{lem:connected} supplies the topology of
the base and its logarithmic covering, while the remaining lemmas provide the
convexity and approximation tools used by the transport construction. For background on polynomial convexity and approximation,
see~\cite{Stout}.

\begin{lemma}\label{lem:connected}
For compact polynomially convex $K\subset\C^n$, both $X_K$ and $Y_K$ are nonempty and path connected.
\end{lemma}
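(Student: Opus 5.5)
The plan is to prove the statement for $X_K$ first, using only elementary properties of polynomially convex sets, and then lift connectedness to $Y_K$ through the covering $q:\C^n\to(\C^*)^n$. Nonemptiness is immediate. $K$ is compact, so it lies in some ball $\{|z|<R_0\}$, whereas $(\C^*)^n$ is unbounded. Hence any $w\in(\C^*)^n$ with $|w|>R_0$ lies in $X_K$, and any $y$ with $q(y)=w$ lies in $Y_K$. Indeed, $q$ is surjective onto $(\C^*)^n$, and
\[
Y_K=q^{-1}\bigl((\C^*)^n\setminus K\bigr)=q^{-1}(X_K).
\]

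\emph{Step 1: $\C^n\setminus K$ is connected.} Suppose $U$ were a bounded component of $\C^n\setminus K$. Then $\partial U\subset K$, so for every polynomial $p$ the maximum principle gives $\sup_U|p|\le\sup_{\partial U}|p|\le\sup_K|p|$. Thus $U$ lies in the polynomial hull of $K$, which equals $K$, a contradiction. So every component is unbounded. Since $K\subset\{|z|<R_0\}$ and $n\ge1$, the set $\{|z|>R_0\}$ is connected and lies in $\C^n\setminus K$. Every unbounded component meets this set, so there is exactly one component.

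\emph{Step 2: $X_K$ is path connected.} We have
\[
X_K=(\C^n\setminus K)\setminus\{z_1\cdots z_n=0\},
\]
that is, $X_K$ is the connected open set $\C^n\setminus K$ with a proper closed analytic subset removed. We then invoke the standard fact that the complement of a proper analytic subset of a connected complex manifold is connected: such a set has real codimension $2$ and does not separate. Since $X_K$ is open, it is also path connected.

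\emph{Step 3: $Y_K$ is path connected.} This is the only step needing care, and it reduces to showing that $\pi_1(X_K)\to\pi_1((\C^*)^n)\cong\Z^n$ is surjective. Fix $R>R_0$ and the base point $w_0=(R,\dots,R)$. For each $j$, consider the loop $\gamma_j(t)=(R,\dots,Re^{it},\dots,R)$, with $Re^{it}$ in slot $j$ and $t\in[0,2\pi]$. It lies in $\{|z|\ge R\}\cap(\C^*)^n\subset X_K$. Its lift through $q$ starting at $y_0\in q^{-1}(w_0)$ translates $y_0$ by $-e_j$ times $2\pi$, and the reversed loop translates by $+2\pi e_j$. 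The deck group of $q$ is $2\pi\Z^n$, acting by translation, so concatenations of the $\gamma_j^{\pm1}$ realize every deck translation.

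Now let $y,y'\in Y_K$. By Step 2, choose a path in $X_K$ from $q(y)$ to $w_0$ and another from $w_0$ to $q(y')$. Lifting the first path from $y$ gives a path in $Y_K$ ending at some $y_0\in q^{-1}(w_0)$. Next we follow the lift of a suitable word in the $\gamma_j^{\pm1}$, which stays in $Y_K=q^{-1}(X_K)$. We choose the word so that lifting the second path from the resulting endpoint ends exactly at $y'$; this is possible because any two lifts of the same path differ by a deck translation, and every deck translation is realized by such a word. Concatenating these three lifts joins $y$ to $y'$ inside $Y_K$.
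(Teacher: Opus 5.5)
Your proof is correct and follows essentially the same route as the paper: the maximum principle gives connectedness of $\C^n\setminus K$, removing the coordinate hyperplanes gives $X_K$, and lifting the coordinate circle loops realizes every deck translation in $2\pi\Z^n$. The only difference is in Step 2, where you cite the general fact that a proper analytic subset does not disconnect a connected complex manifold, while the paper proves this instance directly by joining points along complex affine lines inside a chain of small balls.
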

\begin{proof}
Choose $R>0$ with $|w_j|<R$ for every $w\in K$ and every $j$, and set
$b=(R,\ldots,R)\in X_K$. Since $q$ is surjective, $Y_K\ne\varnothing$ as well.
The complement $M=\C^n\setminus K$ is connected: any component other than the
one containing the exterior of a large ball would be bounded, with boundary in
$K$, and the maximum-modulus principle would force it into $\widehat K=K$. Thus $M$
is a connected open set, hence path connected.

Inside a ball in $M$, two points outside the hyperplanes can be joined in their
complex affine line, whose intersection with the hyperplanes consists of at most
$n$ points. A finite chain of such balls along a path in $M$, with joining
points chosen outside the hyperplanes, gives a path in $X_K$, so $X_K$ is path
connected as well.

The loops
\[
 \gamma_j(t)=(R,\ldots,R e^{2\pi it},\ldots,R),\qquad
 0\le t\le1,\quad 1\le j\le n,
\]
lie in $X_K$. Concatenations of their lifts and reversed lifts join every point of
$q^{-1}(b)=b'+2\pi\Z^n$ to a fixed $b'\in q^{-1}(b)$. Any $z\in Y_K$ can first
be joined to this fiber by lifting a path in $X_K$ from $q(z)$ to $b$, and then
to $b'$. Thus $Y_K$ is path connected.
\end{proof}

The next lemma is the gluing device for the two local prescriptions in
Lemma~\ref{lem:transport}: one is imposed near an ambient compact set and the
other along a disjoint closed submanifold. It is the disjoint-submanifold case
of Forstneri\v c's convexity lemma~\cite[Lemma 6.5]{F99}; the Stein form needed
here is proved directly.
\begin{lemma}\label{lem:union}
Let $Z$ be Stein, $G\subset Z$ a closed complex submanifold, $P\subset Z\setminus G$ compact and $\OO(Z)$-convex, and $Q\subset G$ compact and $\OO(G)$-convex. Then $P\cup Q$ is $\OO(Z)$-convex.
\end{lemma}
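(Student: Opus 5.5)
We show directly that every point $z\in Z\setminus(P\cup Q)$ can be separated from $P\cup Q$ by a function in $\OO(Z)$. The idea is to build a holomorphic function that is small on one of the two pieces, close to a prescribed separating function on the other, and then adjust its value at $z$. Since $P\cap G=\varnothing$ and $G$ is closed, the sets $P$ and $G$ are disjoint.

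We distinguish two cases.

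\emph{Case $z\notin G$.} Because $G$ is a closed complex subvariety of the Stein manifold $Z$, there is $g\in\OO(Z)$ with $g|_G=0$ and $g(z)=1$, by Cartan's Theorem~B. As $P$ is $\OO(Z)$-convex, there is $h\in\OO(Z)$ with $|h(z)|>\max_P|h|$. Multiplying $h$ by a constant, we may assume $h(z)=1$ and $\max_P|h|<1$.

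Consider $\phi=h^N g$. It vanishes on $Q\subset G$, and $|\phi(z)|=1$. On $P$ we have $|\phi|\le(\max_P|h|)^N\max_P|g|$, which is less than $1$ for $N$ large. Hence $\phi$ separates $z$ from $P\cup Q$.

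\emph{Case $z\in G\setminus Q$.} Since $Q$ is $\OO(G)$-convex, there is $u\in\OO(G)$ with $u(z)=1$ and $\max_Q|u|<1$. The key step is the following approximation.

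\textbf{Claim.} $u$ admits an extension $\tilde u\in\OO(Z)$ with $\tilde u|_G=u$ and $\max_P|\tilde u|\le 1/2$.

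To prove the claim, note that $P\cup G$ is a closed subset, and the function equal to $u$ on $G$ and $0$ near $P$ is holomorphic on a neighbourhood of $P\cup G$. We then need Oka--Weil approximation on $P$ together with exact interpolation along $G$. Concretely, we use the exact sequence
\[
0\to\mathcal I_G\to\OO_Z\to\OO_G\to0
\]
and Theorem~B to take any extension $U\in\OO(Z)$ of $u$. On a neighbourhood of the $\OO(Z)$-convex set $P$, the function $-U$ is holomorphic and lies in the ideal of $G$, since $G$ is disjoint from that neighbourhood. By the Oka--Weil theorem for sections of the coherent sheaf $\mathcal I_G$ over an $\OO(Z)$-convex compact set, we can approximate $-U$ on $P$ by a global section $v$ of $\mathcal I_G$. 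Setting $\tilde u=U+v$ gives an extension of $u$ with $\tilde u|_G=u$ and $|\tilde u|\le 1/2$ on $P$.

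This $\tilde u$ satisfies $\tilde u(z)=1>\max_{P\cup Q}|\tilde u|$, so it separates $z$ from $P\cup Q$.

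Combining the two cases gives $\widehat{(P\cup Q)}_{\OO(Z)}=P\cup Q$, which is the asserted $\OO(Z)$-convexity.

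The main obstacle is the Claim, namely Oka--Weil approximation for sections of $\mathcal I_G$. This follows from the standard Cartan Theorem A/B argument on a Stein neighbourhood basis of $P$, given by sublevel sets of a strictly plurisubharmonic exhaustion adapted to $P$. The point of this formulation is that it avoids needing convexity of $P\cup G$, which is not compact.
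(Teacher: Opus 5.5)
Your proof is correct and follows the paper's argument: the case $z\notin G$ is identical, and in the case $z\in G\setminus Q$ the paper uses an equivalent correction, an $h\in\OO(Z)$ with $h|_G=1$ and $\sup_P|h|<1$, which it applies as $\widetilde g h^N$ to an extension $\widetilde g$ of the separating function. The only difference is that you cite Oka--Weil for sections of $\mathcal I_G$ and correct additively, whereas the paper proves that instance by hand and corrects multiplicatively: it takes global generators $h_j$ of $\mathcal I_G$ without common zeros near $P$, solves $\sum h_jb_j=1$ on a Stein neighbourhood by Theorem~B, and applies scalar Oka--Weil to the $b_j$.
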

\begin{proof}
Suppose first that $x\notin P\cup Q$ and $x\notin G$. By Cartan's theorem A
there is $h_0\in\OO(Z)$ that vanishes on $G$ but not at $x$, and by the
$\OO(Z)$-convexity of $P$ there is $a\in\OO(Z)$ with $|a(x)|>\sup_P|a|$. For
large $N$ the function $h_0a^N$ then separates $x$ from $P\cup Q$: it vanishes
on $Q$, while its modulus at $x$ exceeds its supremum on $P$.

Suppose next that $x\in G\setminus Q$. Choose $g\in\OO(G)$ with
$|g(x)|>\sup_Q|g|$, so that in particular $g(x)\ne0$, and extend $g$ to
$\widetilde g\in\OO(Z)$ with $\widetilde g(x)=g(x)$ by Cartan's theorem B. We
claim that there is $h\in\OO(Z)$ with $h|_G=1$ and $\sup_P|h|<1$; to construct
it, we use Cartan's theorem A together with the compactness of $P$ to find
finitely many global functions $h_j$ vanishing on $G$ with no common zero near
$P$. A Stein neighborhood $W$ of $P$ on which they have no common zero admits a
solution $\sum h_jb_j=1$ by Cartan's theorem B. Approximating the $b_j$ on $P$
by global functions $B_j$, which is possible by Oka--Weil, we obtain
$h=1-\sum h_jB_j$ with the required properties. Now
$\widetilde g h^N$ agrees with $g$ on $Q$ and takes the value $g(x)$ at $x$,
whereas
\[
 \sup_P|\widetilde g h^N|
 \le\bigl(\sup_P|\widetilde g|\bigr)\bigl(\sup_P|h|\bigr)^N
 \longrightarrow0.
\]
For $N$ large enough, its modulus at $x$ therefore exceeds both its supremum
on $P$ and its supremum on $Q$.
\end{proof}

The transport argument requires approximation not only of the local potentials
but also of the fiber derivatives that determine their Hamiltonian fields. The
following fixed-neighborhood form makes that simultaneous control explicit.

\begin{lemma}\label{lem:polyapprox}
Suppose $B$ is Stein and $S\subset B\times\C^n$ is compact and $\OO(B\times\C^n)$-convex. A function holomorphic near $S$ can be approximated uniformly, together with any fixed finite number of fiber derivatives, on a compact neighborhood of $S$ chosen inside its domain and fixed independently of the approximation error, by polynomials
\[
 P(b,w)=\sum_{k\in A}c_k(b)w^k,\qquad
 A\subset\Z_{\ge0}^n\text{ finite},\quad c_k\in\OO(B).
\]
\end{lemma}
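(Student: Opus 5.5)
Since $S$ is $\OO(B\times\C^n)$-convex, I will first fix the neighborhood. The function $f$ is holomorphic on some open $W\supset S$. Because $B\times\C^n$ is Stein, $S$ has a basis of neighborhoods that are compact $\OO(B\times\C^n)$-convex sets, for instance analytic polyhedra $\{|g_1|\le1,\ldots,|g_N|\le1\}\cap\{\phi\le c\}$ cut out by global functions. I choose such a compact $\OO(B\times\C^n)$-convex neighborhood $S'$ with $S\subset\operatorname{int}S'\subset S'\subset W$, and then a slightly larger compact $S''$ of the same kind with $S'\subset\operatorname{int}S''\subset S''\subset W$. The set $S'$ is the fixed neighborhood in the statement, and it depends only on $S$ and $W$, not on the error. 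Fix an order $r$ and $\varepsilon>0$.

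The first step is Oka--Weil on the Stein manifold $B\times\C^n$. It gives $g\in\OO(B\times\C^n)$ with $|f-g|<\delta$ on $S''$. By the Cauchy estimates on polydiscs in the fiber directions, whose radius $\rho>0$ is fixed by $\operatorname{dist}(S',\partial S'')$, the fiber derivatives of order $\le r$ satisfy $|\partial_w^\alpha(f-g)|\le C_{r,\rho}\,\delta$ on $S'$. Choosing $\delta$ small therefore gives $\varepsilon/2$-control of all fiber derivatives of order $\le r$ on $S'$.

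The second step turns $g$ into a polynomial in $w$ with coefficients in $\OO(B)$. For each $b$ the function $g(b,\cdot)$ is entire, so it has a Taylor expansion $g(b,w)=\sum_k c_k(b)w^k$ with $c_k(b)=\frac{1}{k!}\partial_w^k g(b,0)$. Each $c_k$ lies in $\OO(B)$, and the series converges locally uniformly on $B\times\C^n$. This follows from the Cauchy formula on the polydisc of radius $R$ in $w$, uniformly for $b$ in compact subsets of $B$. Since the projection of $S'$ lies in a compact subset of $B\times\{|w_j|\le R\}$, the truncated sums $P_N=\sum_{|k|\le N}c_k(b)w^k$ converge to $g$ uniformly on $S''$. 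Applying the Cauchy estimates again, their fiber derivatives of order $\le r$ converge on $S'$. Taking $N$ large then yields the required $P$ with $A=\{|k|\le N\}$.

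The main obstacle is making sure that the neighborhood is genuinely fixed while derivative control is maintained. That is why I use two nested holomorphically convex neighborhoods, with Cauchy estimates across the fixed gap between them. The only nontrivial ingredient is the existence of a compact $\OO(B\times\C^n)$-convex neighborhood inside $W$. I would obtain it by the standard argument: for each point $p$ of the compact set $\partial W'$, where $W'\Subset W$ is a relatively compact open neighborhood of $S$, pick a global function $h$ with $|h(p)|>1\ge\sup_S|h|$ after scaling. By compactness of $\partial W'$, finitely many such $h$ suffice. The component of the resulting polyhedron containing $S$ is compact in $W'$, and if needed I intersect with a sublevel set of an exhaustion function.
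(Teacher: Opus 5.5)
Your proposal is correct and follows the paper's proof: fix nested compact $\OO(B\times\C^n)$-convex neighborhoods, apply Oka--Weil on the larger one with Cauchy estimates across the fixed gap, and then truncate the fiber Taylor expansion of the global approximant. The only addition is your sketch of why such neighborhoods exist, which the paper treats as standard; there you should scale so that $\sup_S|h|<1$ strictly, which puts $S$ in the interior of the polyhedron.
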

\begin{proof}
Choose, and then fix, nested compact $\OO(B\times\C^n)$-convex neighborhoods of
$S$ inside the domain of the given function. Oka--Weil approximation on the
larger one produces a function in $\OO(B\times\C^n)$, and Cauchy's estimates
then control its derivatives on the smaller one. Expanding this approximant in a
Taylor series in $w$ about zero, we obtain coefficients holomorphic on $B$, and
the finite truncations converge together with all fiber derivatives uniformly on
compact subsets.
\end{proof}

\section{Polynomial transport}\label{sec:transport}
We approximate transport by compositions of complete holomorphic flows, as in
Anders\'en--Lempert theory~\cite{AL}; here the polynomial exponential modes
must also extend across the coordinate hyperplanes. The first lemma preserves
global automorphisms while discretizing a nonautonomous flow; the second uses
that mechanism to move a section while remaining uniformly close to the
identity on the possibly unbounded logarithmic obstacle.

For $1\le r<s\le n$ and a scalar potential $H$ in additive coordinates, the pair Hamiltonian field is
\[
 V_{rs,H}=(\partial_{z_s}H)e_r-(\partial_{z_r}H)e_s.
\]
If $H(z)=c\,q(z)^k$, where $k\in\Z_{\ge0}^n$, its velocity is
\begin{equation}\label{eq:mode}
 v_{rs,k}(w)=icw^k(-k_s e_r+k_r e_s),\qquad w=q(z).
\end{equation}
Since $k\cdot v_{rs,k}=0$, the flow is complete and is given by the formula
\begin{equation}\label{eq:addflow}
 z\longmapsto z+\tau v_{rs,k}(q(z)),\qquad \tau\in\C.
\end{equation}
The identity $q(z+\tau v_{rs,k}(q(z)))^k=q(z)^k$ establishes both the flow law
and the inverse, the latter obtained by replacing $\tau$ with $-\tau$. A mode
with $k_r=k_s=0$ vanishes identically.

We introduce an accumulated displacement $h\in\C^n$. The corresponding augmented field on $\C^n_w\times\C^n_h$ is
\begin{equation}\label{eq:augfield}
 \widetilde V_{rs,k}(w,h)=(-i\diag(w)v_{rs,k}(w),\ v_{rs,k}(w)).
\end{equation}
Its complete entire flow is
\begin{equation}\label{eq:augflow}
 (w,h)\longmapsto
 \left((w_j e^{-i\tau v_{rs,k,j}(w)})_{j=1}^n,
          h+\tau v_{rs,k}(w)\right).
\end{equation}
Since $w^k$ is invariant along this flow, the velocity is constant. Because the
exponents are nonnegative, both formulas are entire on all of
$\C^n_w\times\C^n_h$, including the coordinate hyperplanes.

If a composition starts at $(q(z),0)$, induction over its factors gives the identity
\begin{equation}\label{eq:displacement}
 w_{\mathrm{final}}=q(A(z)),\qquad h_{\mathrm{final}}=A(z)-z,
\end{equation}
where $A$ is the corresponding composition of additive automorphisms \eqref{eq:addflow}. If $c$ depends holomorphically on further parameters, then the automorphisms and their inverses form jointly holomorphic families in those parameters.

The completeness of the frozen fields is decisive in the following
approximation: every approximating factor remains a global automorphism, rather
than merely a local flow map.

\begin{lemma}\label{lem:splitting}
Suppose finitely many entire vector fields $W_\nu(t,\zeta,y)$ depend holomorphically on $\zeta$, are continuously differentiable in real $t\in[0,1]$, and are complete for frozen $(t,\zeta)$. Assume that the trajectories of $\dot y=\sum_\nu W_\nu(t,\zeta,y)$ starting from a compact family of initial data, with parameters in a compact set, exist through time one and stay in a compact set. Then the time-one map of this equation is uniformly approximable by finite compositions of flows of the frozen fields, and these compositions and their inverses are holomorphic in $(\zeta,y)$.
\end{lemma}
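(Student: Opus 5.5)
We approximate the time-one map by a product formula: freeze time on a fine partition and split the vector field into its summands (Lie--Trotter splitting). Fix $N$, set $t_j=j/N$ for $0\le j\le N$, and let $\phi^{\nu}_{s}(t,\zeta,\cdot)$ be the flow of the frozen field $W_\nu(t,\zeta,\cdot)$. Completeness makes each $\phi^\nu_s(t,\zeta,\cdot)$ an automorphism of $\C^n$, holomorphic in $(\zeta,y)$ with holomorphic inverse $\phi^\nu_{-s}$. Joint holomorphy in $\zeta$ follows from holomorphic dependence of ODE solutions on parameters, applied on the (global) existence domain. The candidate approximant is
\[
 \Phi_N(\zeta,\cdot)=\Psi_{N-1}\circ\cdots\circ\Psi_0,\qquad
 \Psi_j=\phi^{m}_{1/N}(t_j,\zeta,\cdot)\circ\cdots\circ\phi^{1}_{1/N}(t_j,\zeta,\cdot),
\]
with $m$ the number of fields. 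This is a finite composition of frozen flows, so it and its inverse are holomorphic in $(\zeta,y)$ and defined everywhere.

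\emph{Local consistency.} Let $L$ be the compact set containing the true trajectories, and let $L'$ be a compact neighborhood of it, say its closed $\delta$-neighborhood. On a slightly larger compact set, the fields $W_\nu$ and their $y$-derivatives are bounded uniformly in $t\in[0,1]$ and $\zeta$ in the compact parameter set. By Taylor expansion, for $y\in L'$ and small $h$, each frozen flow satisfies $\phi^\nu_h(t,\zeta,y)=y+hW_\nu(t,\zeta,y)+O(h^2)$. Composing the $m$ flows gives $\Psi_j(y)=y+h\sum_\nu W_\nu(t_j,\zeta,y)+O(h^2)$. The true flow from $t_j$ to $t_{j+1}$ equals $y+h\sum_\nu W_\nu(t_j,\zeta,y)+o(h)$, where the error is uniform by uniform continuity of $W$ in $t$; with the $C^1$ assumption it is in fact $O(h^2)$. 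Hence the one-step error is $o(h)$ uniformly on $L'$.

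\emph{Stability and error accumulation.} The exact evolution maps $E_{s,t}$ are Lipschitz with a uniform constant $e^{C}$ near $L$, by Gronwall with $C$ a bound on $D_yW$ over $L'$. We then use the standard telescoping
\[
\Phi_N-E_{0,1}=\sum_j\bigl(E_{t_{j+1},1}\circ\Psi_j\circ\Phi^{(j)}-E_{t_{j+1},1}\circ E_{t_j,t_{j+1}}\circ\Phi^{(j)}\bigr),
\]
where $\Phi^{(j)}=\Psi_{j-1}\circ\cdots\circ\Psi_0$. This bounds the total error by $e^{C}\cdot N\cdot o(1/N)\to0$, provided all intermediate points stay in $L'$. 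That proviso is established by induction: as long as the accumulated error is below $\delta$, the points $\Phi^{(j)}(y)$ lie within $\delta$ of true trajectories and hence in $L'$, and the error bound then remains below $\delta$ for $N$ large.

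The main obstacle is exactly this bootstrap that keeps the numerical trajectory inside the compact region where the uniform estimates hold. We would handle it with the inductive argument just described, using uniform constants over the compact parameter and initial-data sets. Convergence is then uniform in $(\zeta,y_0)$, which is the claimed approximation.
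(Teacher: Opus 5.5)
Your proposal is correct and follows essentially the paper's route: frozen-time Lie--Trotter splitting on $t_j=j/N$, and a one-step consistency estimate of order $O(N^{-2})$ from bounds on the fields and their first spatial and time derivatives over a buffered compact neighborhood of the exact trajectories. The factors are global and holomorphic with holomorphic inverses by completeness, a bootstrap keeps the discrete orbit inside the buffer, and the only difference is bookkeeping: you telescope through the exact evolution maps (Lipschitz by Gronwall, which strictly needs the discrete error kept below $\delta e^{-C}$ rather than $\delta$), whereas the paper uses the equivalent recursion $e_{j+1}\le(1+Ca)e_j+Ca^2$.
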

\begin{proof}
Choose a compact neighborhood of all the exact trajectories, together with a
positive buffer around it. The fields, their first spatial derivatives, and
their time derivatives are uniformly bounded on this neighborhood. Writing
$a=1/N$ and using the successive frozen flows at $t_j=j/N$, we obtain
\[
 y\longmapsto y+a\sum_\nu W_\nu(t_j,\zeta,y)+O(a^2).
\]
The exact nonautonomous step admits the same expansion, uniformly on the
buffer. The accumulated error therefore satisfies
$e_{j+1}\le(1+Ca)e_j+Ca^2$ with $e_0=0$, whence $e_j\le C'a$ for $j\le N$.
Every factor is defined globally, since the frozen fields are complete, and its
dependence on $(\zeta,y)$, like that of its inverse, is holomorphic. For large
$N$ the accumulated error keeps all intermediate steps inside the buffer, and
the approximation follows.
\end{proof}

The next lemma is the conceptual core of the construction. Its two estimates
encode competing requirements: move the prescribed section along its path, but
move every point of $E_K$ by arbitrarily little. Multiplicative coordinates and
the accumulated displacement turn the latter, noncompact condition into control
near the compact set $K$.

\begin{lemma}\label{lem:transport}
Suppose $n\ge2$, $K\subset\C^n$ is compact and polynomially convex,
$U\subset\C^m$ is convex and open, and $L\Subset U$ is compact and convex. Assume that
$p:\C\times U\to\C^n$ is holomorphic and
\[
 p(t,\zeta)\notin E_K\qquad (0\le t\le1,\ \zeta\in L).
\]
For every $\epsilon>0$ there is a holomorphic family of automorphisms $A_\zeta$ of $\C^n$, $\zeta\in U$, with jointly holomorphic inverse and
\begin{align}
 \sup_{\zeta\in L,\ z\in E_K}|A_\zeta(z)-z|&<\epsilon,\label{eq:obstacle}\\
 \sup_{\zeta\in L}|A_\zeta(p(0,\zeta))-p(1,\zeta)|&<\epsilon.\label{eq:section}
\end{align}
\end{lemma}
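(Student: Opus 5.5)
The plan is to construct $A_\zeta$ as a finite composition of the additive pair flows \eqref{eq:addflow}, and to control everything in the augmented space $\C^n_w\times\C^n_h$ through the identity \eqref{eq:displacement}. The point is that a point $z\in E_K$ satisfies $q(z)\in K$. So if every factor of the composition, viewed through \eqref{eq:augflow}, moves the compact set $L\times K\times\{0\}$ only slightly in both the $w$- and $h$-components, then $h_{\mathrm{final}}=A_\zeta(z)-z$ is uniformly small on all of the unbounded set $E_K$. This gives \eqref{eq:obstacle} from an estimate on a compact set. Similarly, tracking the augmented trajectory of $(q(p(0,\zeta)),0)$ and arranging $h_{\mathrm{final}}\approx p(1,\zeta)-p(0,\zeta)$ gives \eqref{eq:section} exactly, not merely modulo $2\pi\Z^n$.

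First I subdivide $[0,1]$ into short intervals $[t_{j-1},t_j]$, so that on each interval one time-independent potential suffices. For each $j$ I work over the Stein parameter space $B=U$ with the set
\[
 S_j=(L\times K)\cup\{(\zeta,q(p(t_{j-1},\zeta))):\zeta\in L\}\subset U\times\C^n .
\]
The second piece lies in the graph $G$ of $\zeta\mapsto q(p(t_{j-1},\zeta))$, a closed submanifold. It is $\OO(G)$-convex because $L$ is convex, hence polynomially convex. It is disjoint from $L\times K$ because $p(t,\zeta)\notin E_K$. Also $L\times K$ is $\OO(U\times\C^n)$-convex. Lemma~\ref{lem:union} then shows that $S_j$ is $\OO(U\times\C^n)$-convex; by continuity the same holds with $G$ replaced by nearby graphs along the subinterval.

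On a neighborhood of $S_j$ I define a holomorphic potential $H_j(\zeta,w)$ as follows. It is $0$ near $L\times K$. Near the graph it is a function whose pair Hamiltonian fields, summed over pairs $r<s$, produce in additive coordinates the constant velocity $v_j(\zeta)=(p(t_j,\zeta)-p(t_{j-1},\zeta))/(t_j-t_{j-1})$. Since $n\ge2$, every vector is a sum of pair fields of functions linear in $z$: for instance, use $H=a z_2$ for the pair $(1,2)$ to get $a e_1$. Linear functions of $z$ are written near $w\in(\C^*)^n$ via a local branch $z=i\log w$. Lemma~\ref{lem:polyapprox} then approximates $H_j$, together with the first fiber derivatives that determine the fields, by polynomials $\sum_k c_k(\zeta)w^k$. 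These are finite sums of modes \eqref{eq:mode}, and each augmented mode field \eqref{eq:augfield} is complete and entire, including on the coordinate hyperplanes. Applying Lemma~\ref{lem:splitting} to the autonomous sum of these complete modes, I replace its time-$(t_j-t_{j-1})$ map by compositions of single-mode flows. These compositions and their inverses are holomorphic in $(\zeta,w,h)$ for $\zeta\in U$, because the $c_k$ are entire on $U$.

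The estimates come from Gronwall comparison in the augmented space. Near $L\times K\times\{\text{bounded }h\}$ the approximating field is within $\delta$ of $0$, so trajectories from $K\times\{h\}$ drift by $O(\delta)$ in both $w$ and $h$. Near the section the field is within $\delta$ of $(-i\diag(w)v_j,\,v_j)$, whose exact solution moves $w$ along $q(p(t_{j-1},\zeta)+s v_j)$ and $h$ by $s v_j$. Composing over $j$, the errors add up to $O(\delta)$ times the number of steps plus the discretization error of replacing $p$ by its polygonal approximation. Choosing the subdivision first and then $\delta$ and the splitting parameter $N$ small enough yields both estimates.

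The step I expect to be the main obstacle is the uniformity of this bookkeeping. The neighborhoods on which Lemma~\ref{lem:polyapprox} gives control must be fixed before the approximation error. They must contain all perturbed trajectories, including the drifting points of $K$ over many steps, and they must stay disjoint from the neighborhood of the moving section. I would handle this by fixing, for each $j$, a compact buffer neighborhood of $S_j$ in advance, using the positive distance between $L\times K$ and the section graph over the subinterval. I would then choose all $\delta_j$ inductively backwards, so that accumulated drift never leaves the buffers.
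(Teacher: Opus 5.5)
Your proposal is correct in outline and uses the same ingredients as the paper: Lemma~\ref{lem:union}, potentials that vanish near $K$ and are affine in $z$ near the section, polynomial approximation with fiber-derivative control, the displacement variable $h$, and Lemma~\ref{lem:splitting}. The real difference is how you treat time.

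The paper puts $t$ into the Stein base $B=\C\times U$ and applies Lemma~\ref{lem:union} once, to $([0,1]\times L\times K)\cup\{w=q(p(t,\zeta))\}$ over $[0,1]\times L$. It then fixes one pair $S_-\subset S_+$ of neighborhoods and approximates one time-dependent family of potentials whose fields sum to $\partial_tp$. A single nonautonomous augmented system follows, in which the conserved quantities $w_je^{ih_j}$ reduce section tracking to $|\dot e|<\theta$. The cost is that $p$ must be holomorphic in $t$, which is why Lemma~\ref{lem:path} builds an entire-in-$t$ path. Your piecewise-frozen scheme over the base $U$, with difference-quotient velocities $v_j$, uses only the maps $p(t_j,\cdot)$ and uniform continuity in $t$, so the Bernstein step would be unnecessary. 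It needs only autonomous splittings. It also incurs no endpoint error from the polygon, since the polygon passes through every $p(t_j,\zeta)$.

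Its price is the uniformity you flagged, and the remedy you state is circular. Buffers chosen for each $S_j$ after subdividing need not contain the step-$j$ section segment. The positive distance from $L\times K$ bounds where $H_j$ is defined, not the size of the $\OO$-convex neighborhood on which Lemma~\ref{lem:polyapprox} gives control. Instead, before subdividing, fix a tube radius $\rho>0$ and a neighborhood $N$ of $K$ that are valid for all frozen times. This follows from compactness of $[0,1]$: an $\OO$-convex neighborhood chosen for the frozen time $t_0$ still serves all frozen times near $t_0$ (with half its tube radius), once the logarithms are renormalized at $q(p(t,\zeta))$. Then subdivide so that $|p(t_j,\zeta)-p(t_{j-1},\zeta)|<\rho/4$ on $L$, and take one small $\delta$. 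The drift near $K$ totals at most $C_N\delta$, so no backward induction is needed. Finally take large splitting parameters. Putting $t$ into the base, as the paper does, is what removes this bookkeeping.
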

\begin{proof}
If $L$ is empty, the identity family suffices; if $K$ is empty, so does the
translation by $p(1,\zeta)-p(0,\zeta)$. In the remaining case, set $B=\C\times
U$ and $J=[0,1]\times L$. The proof has four stages. We first prescribe local
Hamiltonian potentials near the obstacle and the section. We then approximate
them by polynomials with derivative control. The augmented flow converts compact
control near $K$ into displacement control on all of $E_K$, and a final
discretization by frozen complete flows produces the required automorphisms.

\smallskip\noindent\emph{Step 1: local Hamiltonian data.}
In $Z=B\times\C^n_w$, consider
\[
 P_0=J\times K,\quad
 G=\{(t,\zeta,w):w=q(p(t,\zeta))\},\quad
 Q_0=G|_J.
\]
The product $P_0$ is $\OO(Z)$-convex, because $J$ is convex and $K$ is
polynomially convex. The closed graph $G$ is biholomorphic to $B$, and $Q_0$ is
$\OO(G)$-convex. Avoidance gives $P_0\cap G=\varnothing$, so
Lemma~\ref{lem:union} makes $P_0\cup Q_0$ $\OO(Z)$-convex.

Set $v=\partial_t p$ and $a=q(p)$. The disjoint compact sets $P_0$ and $Q_0$
have positive distance, so we may choose disjoint open neighborhoods $N_P$ of
$P_0$ and $N_Q$ of $Q_0$. Each $a_j$ is nonzero, and the open set
\[
 \left\{(t,\zeta,w):\max_j\left|\frac{w_j}{a_j(t,\zeta)}-1\right|<\frac12\right\}
\]
contains $Q_0$. Intersecting $N_Q$ with this set leaves it disjoint from $N_P$
and still containing $Q_0$, while every ratio now lies in the fixed disc
$|\xi-1|<1/2$. On that disc we use the holomorphic logarithm normalized by
$\log1=0$. The $n-1$ potentials vanish on $N_P$ and have the following formulas
on $N_Q$:
\begin{align}
 H_{12}(t,\zeta,w)
   &=i\left(v_1\log\frac{w_2}{a_2}-v_2\log\frac{w_1}{a_1}\right),\label{eq:local12}\\
 H_{1j}(t,\zeta,w)&=-iv_j\log\frac{w_1}{a_1},\qquad 3\le j\le n.\label{eq:local1j}
\end{align}
These prescriptions give holomorphic potentials on the disjoint union
$N_P\cup N_Q$. On a
sufficiently small additive tube about $p(t,\zeta)$, their pullbacks by $q$ are
\[
 v_1(z_2-p_2)-v_2(z_1-p_1),\qquad -v_j(z_1-p_1),
\]
and their pair Hamiltonian fields sum to the constant translation $v$ in all
$n$ coordinates.

\smallskip\noindent\emph{Step 2: polynomial approximation with derivative
control.}

Fix compact $\OO(Z)$-convex neighborhoods $S_-,S_+$ of $P_0\cup Q_0$ with
$S_-\subset\operatorname{int}S_+\subset S_+\subset N_P\cup N_Q$. Each $S_\pm$
is the union of the disjoint compact parts $S_\pm\cap N_P$ and $S_\pm\cap N_Q$.
Fix a bounded neighborhood $N$ of $K$ with $J\times\overline N\subset
\operatorname{int}S_-\cap N_P$, and set
\[
 d=\operatorname{dist}(K,\C^n\setminus N)>0,\qquad
 C_N=\max_{w\in\overline N}\max_j|w_j|.
\]
Choose $\eta>0$ with $C_N\eta<d$ and $\eta<\epsilon/2$.

Each potential is holomorphic on an open neighborhood of the whole $S_+$, so
Lemma~\ref{lem:polyapprox} applies on $S_+$ and approximates all the potentials
simultaneously. For each potential $H$ and its polynomial approximant $\widehat
H$, Cauchy's estimates give
\[
 \sup_{S_-}|\partial_w^\alpha(\widehat H-H)|
 \le C_\alpha\sup_{S_+}|\widehat H-H|
\]
for the required fiber derivatives, with constants determined by the fixed
neighborhoods and their positive buffer. The sum of the approximating pair
Hamiltonian fields has velocity $\mathcal V(t,\zeta,w)$. The local potentials
vanish identically on $J\times\overline N$, and their fiber derivatives vanish
there as well; hence the Cauchy bound together with $|w_j|\le C_N$ in the Euler
operators $-iw_j\partial_{w_j}$ gives $|\mathcal V|<\eta$ on $J\times\overline
N$, including where some coordinates of $w$ vanish, once the errors
$\sup_{S_+}|\widehat H-H|$ are sufficiently small.

The approximating potentials have no negative powers, so their pullbacks
and vector fields are entire and single-valued in $z$.
On the tube about the section, the local logarithms have
the affine pullbacks displayed above. Fix $\delta>0$, before choosing the
approximation error, so that
\[
 (t,\zeta,q(p(t,\zeta)+u))\in\operatorname{int}S_-\cap N_Q
 \qquad ((t,\zeta)\in J,\ |u|\le\delta).
\]
Choose $0<\theta<\min\{\delta/2,\epsilon/2\}$ and make the derivative
approximation close enough that
\[
 |\mathcal V(t,\zeta,q(p(t,\zeta)+u))-v(t,\zeta)|<\theta
\]
throughout this fixed tube.

\smallskip\noindent\emph{Step 3: compact control in the augmented system.}

Consider the nonautonomous augmented system
\begin{equation}\label{eq:augsystem}
 \dot w=-i\diag(w)\mathcal V(t,\zeta,w),\qquad
 \dot h=\mathcal V(t,\zeta,w).
\end{equation}
For initial data $(w(0),h(0))=(w_0,0)$ with $w_0\in K$, the system gives,
as long as $w(t)$ has not left $N$,
\[
 |h(t)|\le\eta t,\qquad |w(t)-w_0|\le C_N\eta t.
\]
The second inequality prevents a first exit before time one. Since
$C_N\eta<d$, the trajectories with initial data in $K$ therefore remain in the
compact set
\[
 \{w:\dist(w,K)\le C_N\eta\}\times\{h:|h|\le\eta\}\subset N\times\C^n,
\]
and $|h(1)|<\epsilon/2$, uniformly in $\zeta\in L$, including for initial points
of $K$ on the coordinate hyperplanes.

For the section initial data $(q(p(0,\zeta)),0)$, the component equations
in \eqref{eq:augsystem} give $\frac{d}{dt}(w_j(t)e^{ih_j(t)})=0$. Since $h(0)=0$,
this yields
\[
 w(t)=q(p(0,\zeta)+h(t)).
\]
Set $e(t)=p(0,\zeta)+h(t)-p(t,\zeta)$, so that $e(0)=0$. On the maximal forward
interval of existence, while $t\le1$ and $|e(t)|<\delta$, the tube estimate
gives $|\dot e(t)|<\theta$, hence
\[
 |e(t)|\le\theta t<\delta/2.
\]
This rules out a first exit from the additive tube. For section trajectories,
\[
 w=q(p(t,\zeta)+e(t)),\qquad
 h=p(t,\zeta)-p(0,\zeta)+e(t),\qquad |e(t)|\le\theta.
\]
Since $J$ is compact, these formulas confine that family to a compact set as
well. All solutions therefore extend through time one, and the
section estimate gives
\[
 |h(1)-(p(1,\zeta)-p(0,\zeta))|<\epsilon/2.
\]
Lemma~\ref{lem:splitting} thus applies to both compact initial families.

\smallskip\noindent\emph{Step 4: frozen complete flows and the two final
estimates.}
Expand all polynomial potentials into monomials and apply
Lemma~\ref{lem:splitting} to their augmented fields \eqref{eq:augfield}, with
error less than $\epsilon/2$ in the displacement component. Since $q(z)\in K$
for every $z\in E_K$, all the corresponding augmented initial data lie in the
compact set $K\times\{0\}$. Identity~\eqref{eq:displacement} identifies the
final displacement with $A_\zeta(z)-z$, so the uniform estimate in the
augmented coordinates gives \eqref{eq:obstacle} on the entire, possibly
unbounded set $E_K$. For the section initial data, the same identity yields
\eqref{eq:section}.
\end{proof}

\section{Entire sprays}\label{sec:sprays}
This section closes the proof of Theorem~\ref{thm:general}. The first lemma
upgrades a continuous real-time deformation to a path that is entire in the
complex time variable, as required by Lemma~\ref{lem:transport}. A fixed
Fatou--Bieberbach model then turns the transported section into a normalized
entire spray.

\begin{lemma}\label{lem:path}
Suppose $Y\subset\C^n$ is open and path connected, $U\subset\C^m$ is convex,
$L\Subset U$ is compact and convex, and $f:U\to Y$ is holomorphic. For every
$p_*\in Y$ there exists a holomorphic map $p:\C\times U\to\C^n$ such that
\[
 p(0,\zeta)=f(\zeta),\quad p(1,\zeta)=p_*,\quad
 p(t,\zeta)\in Y\quad (t\in[0,1],\ \zeta\in L).
\]
\end{lemma}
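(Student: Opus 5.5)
Since $U$ is convex, $L$ is path connected; the plan is to build a continuous deformation first and then make it entire in the complex time variable. Fix $\zeta_0\in L$ and a path $\gamma:[0,1]\to Y$ from $f(\zeta_0)$ to $p_*$; such a path exists because $Y$ is path connected. Define the continuous homotopy
\[
 \Phi(t,\zeta)=\begin{cases} f\bigl(\zeta+2t(\zeta_0-\zeta)\bigr), & 0\le t\le\tfrac12,\\ \gamma(2t-1), & \tfrac12\le t\le1,\end{cases}
\]
for $\zeta\in U$. The first segment stays in $U$ by convexity, so $\Phi$ maps $[0,1]\times L$ into $Y$, and it equals $f(\zeta)$ at $t=0$ and $p_*$ at $t=1$. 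The compact set $\Phi([0,1]\times L)\subset Y$ has positive distance $3\rho$ from $\C^n\setminus Y$.

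The second step is to replace $\Phi$ by a polynomial in $t$ with coefficients holomorphic on $U$. First approximate $\gamma$ uniformly within $\rho$ by a polynomial path $g(s)$ with $g(0)=f(\zeta_0)$ and $g(1)=p_*$. One can do this by Weierstrass approximation of the real and imaginary parts of the coordinates, followed by adding an affine correction in $s$ to fix the endpoints, which moves $g$ by a small amount. The first segment is already real-analytic: $t\mapsto f(\zeta+2t(\zeta_0-\zeta))$ extends holomorphically to complex $t$ near $[0,\tfrac12]$, but it is not entire. To fix this, use Runge/Oka--Weil on the polynomially convex compact set $[0,1]\times L\subset\C\times\C^m$; this set is convex, hence polynomially convex. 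The aim is a map $P(t,\zeta)$ holomorphic on $\C\times U$, or even polynomial, that is within $\rho$ of the continuous map $\Phi$ on $[0,1]\times L$.

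The obstacle is that Oka--Weil approximates holomorphic functions, not merely continuous ones. I would handle this by smoothing. Replace the concatenation by a single real-parameter family and approximate in the $t$-direction only. Convolving in $t$ makes $\Phi$ smooth in real $t$ while keeping it holomorphic in $\zeta$. Then apply Weierstrass approximation in $t$ with holomorphic-in-$\zeta$ coefficients, for instance Bernstein polynomials $\sum_k \Phi(k/N,\zeta)\binom Nk t^k(1-t)^{N-k}$. These converge uniformly on $[0,1]\times L$, are polynomial in $t$ with coefficients holomorphic on $U$, and hence are entire in $t\in\C$. Bernstein polynomials also interpolate the endpoints exactly: $B_N(0,\zeta)=\Phi(0,\zeta)=f(\zeta)$ and $B_N(1,\zeta)=\Phi(1,\zeta)=p_*$. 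This removes the need for an endpoint correction, and no smoothing is needed either, since Bernstein approximation only requires continuity in $t$ uniformly in $\zeta\in L$. That uniformity holds because $\Phi$ is uniformly continuous on the compact set $[0,1]\times L$.

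Taking $p=B_N$ with $N$ large enough that $|B_N-\Phi|<\rho$ on $[0,1]\times L$ gives $p(t,\zeta)\in Y$ for real $t\in[0,1]$ and $\zeta\in L$. Each coefficient $\Phi(k/N,\cdot)$ is holomorphic on all of $U$, since it is either $f$ composed with an affine self-map of $U$ or a constant. Hence $p$ is holomorphic on $\C\times U$ with the required endpoint values. The main point needing care is the uniform Bernstein estimate on $[0,1]\times L$, which reduces to uniform continuity as noted.
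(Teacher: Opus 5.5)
Your proposal is correct and is essentially the paper's proof: contract $\zeta$ to $\zeta_0$ inside the convex source, follow a fixed path in $Y$ to $p_*$, and take Bernstein polynomials in $t$ with holomorphic coefficients. These fix both endpoints exactly and converge uniformly on $[0,1]\times L$ by uniform continuity. You should add the trivial case $L=\varnothing$, where no $\zeta_0$ can be chosen and the affine path $(1-t)f(\zeta)+tp_*$ suffices; the detours through Weierstrass approximation, Oka--Weil and smoothing are unnecessary, as you yourself conclude.
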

\begin{proof}
If $L$ is empty, the affine path $p(t,\zeta)=(1-t)f(\zeta)+tp_*$ will do, so
assume that $L$ is nonempty and choose $\zeta_0\in L$. Contracting the source
from $\zeta$ to $\zeta_0$ inside $U$ and then following a fixed path in $Y$ from
$f(\zeta_0)$ to $p_*$ yields a continuous map $Q:[0,1]\times U\to\C^n$,
holomorphic in $\zeta$ at each fixed time, with $Q(0,\zeta)=f(\zeta)$ and
$Q(1,\zeta)=p_*$. Its image on the compact set $[0,1]\times L$ is compact in
$Y$, so we may choose $d_Y>0$ such that the $d_Y$-neighborhood of this image is
contained in $Y$. The Bernstein polynomials
\[
 p_N(s,\zeta)=\sum_{j=0}^N Q(j/N,\zeta)\binom Nj s^j(1-s)^{N-j}
\]
are holomorphic on $\C\times U$ and preserve the endpoints exactly. To estimate
them on $[0,1]\times L$, write $M_Q=\sup_{[0,1]\times L}|Q|$ and choose a
modulus of continuity $\omega$ for $Q$ in the time variable, uniform in
$\zeta\in L$;
such a modulus exists because $Q$ is continuous on the compact product
$[0,1]\times L$. For every $a>0$,
\[
 \sup_{[0,1]\times L}|p_N-Q|\le\omega(a)+\frac{M_Q}{2Na^2},
\]
since the mass of the Bernstein kernel over the indices satisfying $|s-j/N|>a$ is at most
$1/(4Na^2)$, its second moment being $s(1-s)/N\le1/(4N)$. Choosing $a$ with
$\omega(a)<d_Y/2$ and then $N$ so large that $M_Q/(2Na^2)<d_Y/2$ gives
$p_N([0,1]\times L)\subset Y$.
\end{proof}

For $M\in\mathbb R$, write
\[
 E_M^{(n)}=\{z\in\C^n:\im z_j\le M\text{ for every }j\}.
\]
Appendix~\ref{app:model} constructs a biholomorphism
\[
 \Phi_2:\C^2\longrightarrow B_2\subset\C^2\setminus E_0^{(2)},\qquad
 \Phi_2(0)=(i,i),\quad D\Phi_2(0)=I_2.
\]
The product map
\begin{equation}\label{eq:productmodel}
 \Phi(\xi)=\left(\Phi_2(\xi_1,\xi_2)+i(M+1)(1,1),
                  \xi_3,\ldots,\xi_n\right)
\end{equation}
is biholomorphic onto its image, satisfies $D\Phi(0)=I_n$, and is centered at
\[
 p_*=(i(M+2),i(M+2),0,\ldots,0).
\]
Its image avoids the intersection of the first two lower halfspaces
$\{\im z_1\le M+1,\ \im z_2\le M+1\}$ and therefore avoids $E_{M+1}^{(n)}$.

\begin{proof}[Proof of Theorem~\ref{thm:general}]
Nonemptiness and path connectedness follow from Lemma~\ref{lem:connected}. If
the source is empty, there is nothing to prove; if $E_K$ is empty, then
$Y_K=\C^n$, and affine sprays give the assertion. Suppose therefore that $E_K$
is nonempty and set $M=\log\max\{1,\max_{w\in K}\max_j|w_j|\}$. Since
$|e^{-iz_j}|=e^{\im z_j}$,
\[
 E_K\subset E_M^{(n)}.
\]
We use the fixed map \eqref{eq:productmodel}, whose center $p_*$ lies in $Y_K$.

Shrinking the original source if necessary, we may assume that it is a bounded
convex open set $U$, and we choose a compact convex $L$ with
\[
 L_0\subset\operatorname{int}L\subset L\Subset U\Subset U_0.
\]
By Lemma~\ref{lem:path} there is a holomorphic time path from $f$ to $p_*$ that
avoids $E_K$ for $t\in[0,1]$ and $\zeta\in L$. Applying
Lemma~\ref{lem:transport} with $\epsilon<1/4$, followed by the correction of
the center by the holomorphic translation
\[
 \widetilde A_\zeta(z)=A_\zeta(z)+p_*-A_\zeta(f(\zeta)),
\]
we obtain $\widetilde A_\zeta(f(\zeta))=p_*$ together with
\[
 \sup_{z\in E_K}|\widetilde A_\zeta(z)-z|<2\epsilon<1/2
 \quad (\zeta\in L).
\]
For $z\in E_K$ and every $j$,
\[
 \im\widetilde A_{\zeta,j}(z)
 \le\im z_j+|\widetilde A_{\zeta,j}(z)-z_j|
 <M+2\epsilon<M+1.
\]
Hence $\widetilde A_\zeta(E_K)\subset E_{M+1}^{(n)}$. On
$V=\operatorname{int}L$, consider
\[
 C_\zeta=D\widetilde A_\zeta(f(\zeta)),\qquad
 F(\zeta,\xi)=\widetilde A_\zeta^{-1}\bigl(\Phi(C_\zeta\xi)\bigr).
\]
The matrices $C_\zeta$ are holomorphic and invertible, and the inverse
automorphism family is jointly holomorphic, so $F$ is holomorphic on
$V\times\C^n$. Its image avoids $E_K$: indeed, $\Phi$ avoids $E_{M+1}^{(n)}$,
while $\widetilde A_\zeta(E_K)$ lies inside that set. Moreover
\[
 F(\zeta,0)=f(\zeta),\qquad
 D_\xi F(\zeta,0)=C_\zeta^{-1} I_n C_\zeta=I_n.
\]
Every fiber map is a composition of biholomorphisms onto their images, so the
normalization $D_\xi F(\zeta,0)=I_n$ makes $F$ a dominating spray, and
Kusakabe's convex spray criterion~\cite[Theorem 2.2]{Kus} gives the Oka property
of $Y_K$.

Since $q:Y_K\to X_K$ is an unramified holomorphic covering, the covering
invariance of the Oka property~\cite[Proposition 2.8]{F13} implies that $X_K$
is Oka as well.
\end{proof}

\section{Inversion charts and slab crosses}\label{sec:slabs}

The main device in this section is one-dimensional: translated inversion turns
the complement of an arbitrary connected planar domain into a compact
polynomially convex set. Applied coordinatewise, this places each punctured slab
chart in the setting of Theorem~\ref{thm:general}; Kusakabe's localization
theorem then assembles finitely many such charts.

\begin{lemma}\label{lem:inversion}
For a nonempty connected planar domain $U$ and $a\in U$, denote by $J_a$ the M\"obius transformation $z\mapsto1/(z-a)$ of $\PP^1$. Then
\[
 K_a=J_a(\PP^1\setminus U)
     =\{0\}\cup\{1/(x-a):x\in\C\setminus U\}
\]
is a compact polynomially convex subset of $\C$ containing $0$, and
\begin{equation}\label{eq:inversion}
 \C\setminus K_a=J_a(U\setminus\{a\}),\qquad
 w\in K_a\ \Longleftrightarrow\ a+1/w\notin U\quad(w\ne0).
\end{equation}
\end{lemma}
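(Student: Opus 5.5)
The plan is to work on the Riemann sphere, where $J_a$ is a homeomorphism of $\PP^1$ onto itself with $J_a(a)=\infty$ and $J_a(\infty)=0$.

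\emph{Step 1: the two descriptions of $K_a$.} Since $U$ is an open subset of $\C$, the complement $\PP^1\setminus U$ is closed in the compact space $\PP^1$, hence compact. It consists of $\infty$ together with $\C\setminus U$. Because $a\in U$, the point $a$ is not in $\PP^1\setminus U$, so $J_a$ never sends it to $\infty$. Therefore $K_a=J_a(\PP^1\setminus U)$ is a compact subset of $\PP^1\setminus\{\infty\}=\C$. It contains $J_a(\infty)=0$, and its remaining points are $1/(x-a)$ with $x\in\C\setminus U$. This gives the displayed formula for $K_a$.

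\emph{Step 2: the identities \eqref{eq:inversion}.} Since $J_a$ is a bijection of $\PP^1$,
\[
 \PP^1\setminus K_a=J_a(U).
\]
Removing the point $\infty=J_a(a)$ from both sides gives $\C\setminus K_a=J_a(U\setminus\{a\})$. For $w\ne0$, the equation $w=1/(x-a)$ has the unique solution $x=a+1/w\in\C$. Hence $w\in K_a$ exactly when $a+1/w\notin U$.

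\emph{Step 3: polynomial convexity.} For a compact planar set, polynomial convexity is equivalent to connectedness of its complement in $\C$; the nontrivial direction is Runge's theorem, or equivalently the maximum principle on bounded complementary components. By Step 2,
\[
 \C\setminus K_a=J_a(U\setminus\{a\}).
\]
This is the homeomorphic image of $U\setminus\{a\}$, and a connected open planar set stays connected after one point is removed. So $\C\setminus K_a$ is connected, and $K_a$ is polynomially convex.

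The only point needing any care is that $U$ may be unbounded or irregular. This causes no difficulty: all the statements above are purely topological on $\PP^1$, and no boundedness or regularity of $U$ is used. The main step to cite properly is the planar characterization of polynomial convexity; I would quote it from~\cite{Stout} rather than reprove it.
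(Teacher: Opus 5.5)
Your proof is correct and follows the paper's argument step by step: compactness and $0\in K_a$ come from $a\notin\PP^1\setminus U$, both identities come from the bijectivity of $J_a$, and polynomial convexity comes from the connectedness of $U\setminus\{a\}$ together with the planar hull characterization (the paper justifies that connectedness by rerouting a polygonal path around $a$ along a small circle in $U$). One small slip: the direction you need, namely that a connected complement implies polynomial convexity, is the Runge direction, whereas the maximum principle gives the converse (bounded complementary components lie in the hull), so the two are not interchangeable as your parenthetical suggests.
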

\begin{proof}
The compact set $\PP^1\setminus U$ does not contain the pole $a$, so its image
is compact and avoids infinity; since $\infty\notin U$, that image contains
$J_a(\infty)=0$. Bijectivity gives the two identities in \eqref{eq:inversion}.
The punctured planar domain $U\setminus\{a\}$ is path connected: a polygonal path
that passes through $a$ can be rerouted around $a$ by a small circle inside $U$.
Hence $\C\setminus K_a$ is connected. Since the planar polynomial hull consists
exactly of $K_a$ together with the bounded components of its complement, $K_a$
is polynomially convex. For $U=\C$ one has $K_a=\{0\}$.
\end{proof}

Thus connectedness of $U$ is used at one precise point: it makes the complement
of the inverted compact set connected, which in one variable is exactly the
condition needed for polynomial convexity.

\begin{proof}[Proof of Theorem~\ref{thm:slabs}]
Write $\mathcal S=\mathcal S(U_1,\ldots,U_n)$ and fix $a_j\in U_j$. Consider
\[
 \mathcal S_a=\mathcal S\setminus\bigcup_{j=1}^n\{z_j=a_j\},\qquad
 K=\prod_{j=1}^n J_{a_j}(\PP^1\setminus U_j).
\]
By Lemma~\ref{lem:inversion} each compact factor is polynomially convex, and
hence so is their product: a point outside the product has a coordinate outside
the corresponding factor, and a one-variable polynomial in that coordinate
separates it from the product. Coordinatewise inversion is a biholomorphism of
the ambient chart with the coordinate hyperplanes $z_j=a_j$ removed onto
$(\C^*)^n$, and \eqref{eq:inversion} gives the identity
\[
 (J_{a_1},\ldots,J_{a_n})(\mathcal S_a)
   =(\C^*)^n\setminus\bigl(K\cap(\C^*)^n\bigr)=X_K.
\]
Theorem~\ref{thm:general} therefore makes $\mathcal S_a$ Oka.

We now choose two distinct points $a_{j,0},a_{j,1}\in U_j$ for each $j$. The
resulting $2^n$ charts cover $\mathcal S$, since for every $z$ one may choose,
independently in each coordinate, an $a_{j,\varepsilon_j}$ different from $z_j$.
Each chart is Zariski open in $\mathcal S$: its complement in $\mathcal S$ is the
zero set of $\prod_j(z_j-a_{j,\varepsilon_j})$, hence a closed complex
subvariety of $\mathcal S$.
The manifold $\mathcal S$ is connected, its $n$ connected open slabs having the
nonempty common intersection $\prod_jU_j$, so $\mathcal S$ is Oka by Kusakabe's
localization theorem~\cite[Theorem 1.4]{Kus}.
\end{proof}

When $n=1$, the corresponding domain $\Omega_1=\D$ is not Oka.

\subsection{Removing finitely many points from a slab cross}

Finite point removal illustrates a useful stability of the method. In an
inversion chart, deleting finitely many points amounts to adjoining finitely
many points to the compact polynomially convex obstacle. The elementary lemma
below preserves polynomial convexity under that operation.

\begin{corollary}\label{cor:finite-punctures}
If $n\ge2$, $U_1,\ldots,U_n\subset\C$ are nonempty connected open sets, and $P\subset\mathcal S(U_1,\ldots,U_n)$ is finite, then $\mathcal S(U_1,\ldots,U_n)\setminus P$ is Oka.
\end{corollary}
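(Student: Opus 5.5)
We follow the proof of Theorem~\ref{thm:slabs}, now choosing the inversion centers so that every point of $P$ lies in the domain of each chart. For each $j$ the set of $j$-th coordinates $\{p_j:p\in P\}$ is finite. Since $U_j$ is open and nonempty, we may choose two distinct points $a_{j,0},a_{j,1}\in U_j$ that avoid this finite set. The charts
\[
 \mathcal S_{a}\setminus P,\qquad a=(a_{1,\varepsilon_1},\ldots,a_{n,\varepsilon_n}),
\]
then cover $\mathcal S\setminus P$, exactly as before. Each chart is the complement in $\mathcal S\setminus P$ of the hypersurface $\prod_j(z_j-a_{j,\varepsilon_j})=0$, so it is Zariski open. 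The space $\mathcal S\setminus P$ is connected, because removing finitely many points from a connected complex manifold of dimension $n\ge2$ preserves connectedness. By Kusakabe's localization theorem~\cite[Theorem 1.4]{Kus}, it therefore suffices to show that each chart is Oka.

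In a fixed chart, coordinatewise inversion $\iota=(J_{a_1},\ldots,J_{a_n})$ sends $\mathcal S_a$ onto $X_K$, where $K=\prod_jK_{a_j}$. Because no point of $P$ has a coordinate equal to $a_j$, the image $\iota(P)$ is a finite subset of $(\C^*)^n\setminus K$. It follows that
\[
 \iota(\mathcal S_a\setminus P)=(\C^*)^n\setminus\bigl(K'\cap(\C^*)^n\bigr)=X_{K'},\qquad K'=K\cup\iota(P).
\]
Theorem~\ref{thm:general} now applies as soon as $K'$ is compact and polynomially convex. Compactness is clear, so the only real input is the lemma on the union of a polynomially convex set with finitely many points.

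The key step, which I expect to be the only nontrivial one, is therefore the following. If $K\subset\C^n$ is compact and polynomially convex and $F$ is finite, then $K\cup F$ is polynomially convex. By induction it suffices to treat a single point $x\notin K$. Fix $y\notin K\cup\{x\}$. We seek a polynomial that is large at $y$ and small on $K\cup\{x\}$.

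\begin{itemize}
\item \emph{Case $y\notin K$.} Choose a polynomial $g$ with $|g(y)|>\sup_K|g|$, and normalize so that $g(y)=1>\sup_K|g|$. Choose an affine function $\ell$ with $\ell(x)=0$ and $\ell(y)=1$. Then $\ell g^N$ vanishes at $x$ and equals $1$ at $y$. Moreover $\sup_K|\ell g^N|\le(\sup_K|\ell|)(\sup_K|g|)^N\to0$, so for large $N$ this polynomial separates $y$ from $K\cup\{x\}$.
\item \emph{Case $y\in K$.} This case does not arise, because $y\notin K\cup\{x\}$ by assumption.
\end{itemize}

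This argument is also a special case of Lemma~\ref{lem:union}, with $G$ the point $\{x\}$ viewed as a zero-dimensional closed submanifold, which gives an alternative justification. With $K'$ polynomially convex, each chart is Oka by Theorem~\ref{thm:general}, and localization finishes the proof.
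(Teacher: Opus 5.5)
Your proposal is correct and follows the paper's proof: inversion charts, adjoining the finite image of $P$ to the product obstacle, a separating polynomial of the form (affine factor)$\cdot g^N$, the general theorem in each chart, and Kusakabe's localization on the connected space $\mathcal S\setminus P$. The differences are cosmetic. You pick centers avoiding the coordinates of $P$ so that $P$ lies in every chart, whereas the paper allows arbitrary centers and adjoins $I_a(P\cap\mathcal S_a)$. You also add one point at a time by induction, while the paper uses the product $\prod_q\ell_q$ in a single step.
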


\begin{lemma}\label{lem:finite-adjunction}
If $K\subset\mathbb C^n$ is compact and polynomially convex and $Q\subset\mathbb C^n$ is finite, then $K\cup Q$ is polynomially convex.
\end{lemma}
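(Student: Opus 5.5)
The plan is to argue by induction on the number of points of $Q$, so it suffices to treat $Q=\{a\}$ with $a\notin K$. After that, the whole proof is a single separation estimate.

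Let $x\notin K\cup\{a\}$. We must produce a polynomial whose modulus at $x$ exceeds its supremum on $K\cup\{a\}$. We split into two cases according to whether $x$ is separated from $a$ by the same polynomial that separates it from $K$.

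Since $K$ is polynomially convex and $x\notin K$, pick a polynomial $g$ with $|g(x)|>\sup_K|g|$. After scaling we may assume
\[
 |g(x)|=1>\sup_K|g|.
\]
Since $x\ne a$, pick an affine polynomial $\ell$ with $\ell(a)=0$ and $\ell(x)=1$. Consider
\[
 P_N=\ell\,g^N .
\]
Then $P_N(a)=0$ and $|P_N(x)|=1$. On $K$ we have the bound
\[
 \sup_K|P_N|\le\bigl(\sup_K|\ell|\bigr)\bigl(\sup_K|g|\bigr)^N,
\]
which tends to $0$ as $N\to\infty$. For $N$ large, $P_N$ therefore separates $x$ from $K\cup\{a\}$. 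This is exactly the device used in the first case of the proof of Lemma~\ref{lem:union}.

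The point $x=a$ itself lies in the set, so there is nothing to check there. Induction over $Q$ then finishes the proof.

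One could alternatively invoke Lemma~\ref{lem:union} directly, with $G=\{a\}$ viewed as a zero-dimensional closed submanifold. However, the direct argument above is shorter and avoids any Stein-theoretic machinery. There is no real obstacle. The only point needing care is to normalize $g$ so that the factor $\ell$ cannot spoil the estimate on $K$, and taking powers $g^N$ handles this.
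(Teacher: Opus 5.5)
Your proof is correct and is essentially the paper's argument: the paper fixes $x\notin K\cup Q$ and uses $hp^N$, where $h=\prod_{q\in Q}\ell_q$ is a product of affine factors vanishing on all of $Q$ at once (it treats $K=\varnothing$ separately, which in your version is covered by $g\equiv1$), whereas you add the points one at a time by induction with a single affine factor $\ell$. The decisive estimate $\sup_K|\ell g^N|\le(\sup_K|\ell|)(\sup_K|g|)^N\to0$ is the same in both.
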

\begin{proof}
Fix $x\notin K\cup Q$. Choose affine linear polynomials $\ell_q$ with
$\ell_q(q)=0$ and $\ell_q(x)\ne0$, and set $h=\prod_{q\in Q}\ell_q$, with
$h=1$ if $Q=\varnothing$. If $K=\varnothing$, $h$ separates $x$ from $Q$.
Otherwise choose a polynomial $p$ with $|p(x)|>\sup_K|p|$. Since
\[
 \frac{\sup_K|hp^N|}{|h(x)p(x)^N|}
 \le\frac{\sup_K|h|}{|h(x)|}
       \left(\frac{\sup_K|p|}{|p(x)|}\right)^N\longrightarrow0,
\]
the polynomial $hp^N$, which vanishes on $Q$, separates $x$ from $K\cup Q$
for large $N$.
\end{proof}

\begin{proof}[Proof of Corollary~\ref{cor:finite-punctures}]
Write $\mathcal S=\mathcal S(U_1,\ldots,U_n)$. Use the $2^n$ inversion charts
$\mathcal S_a$ and the compact polynomially
convex sets $K$ from the proof of Theorem~\ref{thm:slabs}. In each chart let
$I_a=(J_{a_1},\ldots,J_{a_n})$ and $Q_a=I_a(P\cap\mathcal S_a)$. Then $Q_a$ is
finite, is disjoint from $K$, and
\[
 I_a(\mathcal S_a\setminus P)
   =(\C^*)^n\setminus(K\cup Q_a)=X_{K\cup Q_a}.
\]
Lemma~\ref{lem:finite-adjunction} and Theorem~\ref{thm:general} show that every
$\mathcal S_a\setminus P$ is Oka. These sets form an analytic-Zariski open
cover of $\mathcal S\setminus P$. Since deleting finitely many points does not
disconnect the connected open manifold $\mathcal S$, Kusakabe's localization
theorem~\cite[Theorem~1.4]{Kus} applies again and makes $\mathcal S\setminus P$
Oka.
\end{proof}

Alarc\'on and Forstneri\v c~\cite[Corollary~2.9]{AF} also established a
finite-removal result in the Oka-1 category. For punctured slab crosses,
Corollary~\ref{cor:finite-punctures} gives the full Oka property.

\section{Periodic Euclidean balls}\label{sec:periodic}

Coordinatewise inversion turns the periodic ball obstacle into a relatively
compact set whose closure meets the coordinate hyperplanes;
Theorem~\ref{thm:general} applies once that closure is known to be polynomially
convex.

\subsection{Polynomially convex compactification}

Throughout this section the lattices and $r$ satisfy the hypotheses of
Theorem~\ref{thm:balls}. For each $j$, consider the planar obstacle and its
complement
\[
 A_j=\bigcup_{\lambda\in\Lambda_j}\Dclosed(\lambda,r),
 \qquad U_j=\C\setminus A_j.
\]
Here $\Dclosed(\lambda,r)$ denotes a closed planar disc. The discs are locally
finite and have mutual distance at least $\ell_j-2r>0$, so $A_j$ is closed. The
annulus $r<|z|<\ell_j-r$ lies in $U_j$, and in particular $U_j$ is nonempty.

To see that $U_j$ is path connected, take two points of $U_j$. Since their
distances from the closed set $A_j$ are positive, a sufficiently small increase
of the radius from $r$ to some $s>r$ with $2s<\ell_j$ leaves both points outside
every enlarged disc. A line segment joining the two points meets only finitely
many of these discs; travelling around the boundary circle of each enlarged disc
in place of the portion of the segment inside it gives a path in $U_j$: the
enlarged discs are pairwise
disjoint, and their boundary circles avoid all the original discs of radius $r$.

For $a=(a_1,\ldots,a_n)\in\prod_j U_j$, the map
\[
 I_a:Z_a=\prod_{j=1}^n(\C\setminus\{a_j\})\longrightarrow(\C^*)^n,
 \qquad I_a(z)=\left(\frac1{z_1-a_1},\ldots,\frac1{z_n-a_n}\right)
\]
is a biholomorphism. The closed, locally finite union
\[
 E=\bigcup_{\lambda\in\Lambda}\B(\lambda,r)
\]
is contained in $Z_a$, because its $j$th coordinate projection is $A_j$.

\begin{lemma}\label{lem:compactification}
For every $a\in\prod_j U_j$, the closure $K=\overline{I_a(E)}$ in $\C^n$ is compact and polynomially convex, and
\begin{equation}\label{eq:torus-intersection}
 K\cap(\C^*)^n=I_a(E).
\end{equation}
\end{lemma}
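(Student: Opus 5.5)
The plan is to prove the three assertions in increasing order of difficulty. First compactness, then the identity \eqref{eq:torus-intersection}, then an explicit description of the boundary strata of $K$ on the coordinate hyperplanes. The last of these feeds an induction on $n$ for polynomial convexity, which is the real content of the lemma.

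\emph{Compactness.} Since $E\subset\prod_jA_j$ and $a_j\in U_j$, every $z\in E$ satisfies $|z_j-a_j|\ge\dist(a_j,A_j)>0$. Hence $I_a(E)$ lies in a polydisc and $K$ is compact. Moreover $K\subset\prod_jK_j$, where $K_j=J_{a_j}(\PP^1\setminus U_j)$. By Lemma~\ref{lem:inversion} each $K_j$ is polynomially convex, because $U_j$ is connected, as shown above. Exactly as in the proof of Theorem~\ref{thm:slabs}, the product $P=\prod_jK_j$ is therefore polynomially convex, so the polynomial hull $\widehat K$ lies in $P$.

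\emph{The identity \eqref{eq:torus-intersection}.} Let $I_a(x_k)\to w\in(\C^*)^n$ with $x_k\in E$. Then $x_k\to a+(1/w_1,\ldots,1/w_n)$, and since $E$ is closed this limit lies in $E$. Hence $w\in I_a(E)$.

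\emph{Boundary strata.} Fix $\varnothing\ne S\subsetneq\{1,\ldots,n\}$. Suppose $I_a(x_k)\to w$ with $w_j=0$ exactly for $j\in S$, where $x_k\in\B(\lambda^k,r)$. Then $x_{k,j}-\lambda^k_j$ is bounded, so after passing to a subsequence it converges for $j\in S$. The remaining coordinates then lie in balls of radius at most $r$ about points of $\prod_{j\notin S}\Lambda_j$. Conversely, taking $x_j=\lambda_j$ for $j\in S$ with $\lambda_j\to\infty$ realizes every such point. This gives
\[
K\cap\{w_j=0\ (j\in S),\ w_j\ne0\ (j\notin S)\}=\{0\}^S\times I_{a'}(E'),
\]
where $a'=(a_j)_{j\notin S}$ and $E'$ is the periodic radius-$r$ ball obstacle for $\prod_{j\notin S}\Lambda_j$. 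Taking closures, each slice $K\cap\{w_j=0\}$ is the lower-dimensional compactified obstacle, with the same hypotheses. When $n-|S|=1$ this obstacle is the planar set $K_j$, which is already polynomially convex. So the induction on $n$ starts in dimension one.

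\emph{Polynomial convexity.} Let $w\in\widehat K$; we know $w\in P$.

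If $w_j=0$ for some $j$, we use that $0$ lies on the outer boundary of $K_j$, since it is a limit of inverted discs shrinking to $0$ and $\C\setminus K_j$ is connected. We then need a peak function $g\in\OO(K_j)$-closure, or at least a function in the uniform algebra generated by polynomials on $K_j$, that peaks exactly at $0$. Such a function exists on the planar set $K_j$ by Mergelyan-type arguments, provided $0$ is a peak point. This requires checking that $0$ is a regular boundary point; the nearby components are small discs, so this should be verifiable by a Wiener-type criterion. The function $g(w_j)$ then peaks on the slice $K\cap\{w_j=0\}$, and the standard peak-set localization gives $\widehat K\cap\{w_j=0\}=\widehat{K\cap\{w_j=0\}}$. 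That slice is polynomially convex by the induction hypothesis, so $w\in K$.

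If instead $w\in(\C^*)^n$, I expect the main obstacle. The plan here is to show that the corresponding point $z=a+1/w$ is separated from $E$ by holomorphic functions on $Z_a$ that extend polynomially across the hyperplanes. I would first try to exhibit $K$ as $\{w\in P:\varphi(w)\le0\}$ for a function $\varphi$ plurisubharmonic near $P$, built from $\sum_j|1/w_j+a_j-\lambda_j|^2-r^2$ on the pieces and truncated near the coordinate hyperplanes. Then polynomial convexity of $P$ and the Oka--Weil characterization of hulls by plurisubharmonic functions on a Stein neighborhood of $P$ would give $\widehat K=K$. The difficulty is that a minimum over $\lambda$ is not plurisubharmonic. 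The fallback is to use the disjointness of the balls, guaranteed by $2r<\ell$, together with the lower-dimensional result. With these, one would apply Lemma~\ref{lem:union}-type gluing and a Kallin-type separation of one inverted ball at a time from the rest, using functions of a single coordinate $w_j$ that separate the planar discs of $K_j$.
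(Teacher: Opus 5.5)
\textbf{There is a genuine gap: the case $w\in\widehat K\cap(\C^*)^n$ is never proved.} The parts you do carry out are sound: compactness, the identity \eqref{eq:torus-intersection}, the description of the boundary strata, and the peak-set reduction for points of $\widehat K$ with a vanishing coordinate.

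One side remark on the peak-set step. Dirichlet regularity and Wiener's criterion are neither the relevant criterion nor needed for $0$ to be a peak point of the polynomial closure on $K_j$. Take a function equal to $1$ at $0$ and to constants $c_\mu\in(0,1)$ with $c_\mu\to1$ on the inverted discs. Its truncations are locally constant and holomorphic near the polynomially convex set $K_j$, so it is a uniform limit of polynomials.

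The gap itself is the case you flag as the main one. The plurisubharmonic defining function fails for the reason you give. The fallback is only a list of tools, and neither tool reaches the problem: Lemma~\ref{lem:union} concerns a compact set together with a disjoint closed submanifold, and Kallin-type gluing handles finite unions. Neither addresses infinitely many inverted balls accumulating on the coordinate hyperplanes.

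\textbf{The missing idea is a pointwise localization by idempotents, followed by polynomial convexity of a single inverted ball.}

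First, the idempotents. Write $D_{i,\mu}=\{1/(x-a_i):|x-\mu|\le r\}$ for $\mu\in\Lambda_i$. Each $D_{i,\mu}$ has positive distance from $K_i\setminus D_{i,\mu}$. Hence its indicator $\chi_{i,\mu}$ on $K_i$ is holomorphic near $K_i$, and so is a uniform limit of polynomials on $K_i$.

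Second, the localization. Given $w\in\widehat K\cap(\C^*)^n$, let $\lambda_i$ be the label with $w_i\in D_{i,\lambda_i}$, and set $\chi(w')=\prod_i\chi_{i,\lambda_i}(w'_i)$. This $\chi$ is a uniform limit of polynomials on $P\supset\widehat K$. It equals $1$ at $w$ and on $K\cap\prod_iD_{i,\lambda_i}$, and it vanishes on the rest of $K$. Passing to the limit in the hull inequality gives $|f(w)|\le\sup_{K\cap\prod_iD_{i,\lambda_i}}|f|$ for every polynomial $f$.

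Third, the block is a single inverted ball. The planar discs $\Dclosed(\mu,r)$, $\mu\in\Lambda_i$, are pairwise disjoint because $2r<\ell_i$. So by \eqref{eq:torus-intersection} the block $K\cap\prod_iD_{i,\lambda_i}$ is exactly $I_a(\B(\lambda,r))$.

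Fourth, that set is polynomially convex. It lies in the polynomially convex product of discs $\prod_iD_{i,\lambda_i}\subset(\C^*)^n$. Take $x$ in this product but outside $I_a(\B(\lambda,r))$. Then $a+1/x\notin\B(\lambda,r)$, so some polynomial $g$ separates $a+1/x$ from the convex ball. The function $g(a+1/w')$ is holomorphic near $\prod_iD_{i,\lambda_i}$, so Oka--Weil lets you replace it by a polynomial that still separates $x$ from the block. Hence $w\in K$.

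This is exactly the paper's argument, but the paper runs it for every $p\ne0$ at once. It localizes only in the nonzero coordinates $I$ of $p$ and uses the projection identity $\pr_I(K_{I,\lambda_I})=Q_{I,\lambda_I}$. With that, your induction on $n$ and the peak function at $0$ become unnecessary, though they remain a valid alternative for points on the coordinate hyperplanes.
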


\begin{proof}
The argument has four stages. First, the inverted obstacle is placed in a
compact polynomially convex product envelope. Second, all strata on the
coordinate hyperplanes are identified, together with the projections needed
later. Third, each stratum with prescribed nonzero coordinates is shown to be
polynomially convex. Finally, approximate component indicators localize a point
of the polynomial hull to one stratum, where the preceding identities force it
back into $K$.

\smallskip\noindent\emph{Step 1: a compact product envelope.}
For $\lambda\in\Lambda_j$, write
\[
 D_{j,\lambda}=\left\{\frac1{z-a_j}:|z-\lambda|\le r\right\},
 \qquad
 P_j=\{0\}\cup\bigcup_{\lambda\in\Lambda_j}D_{j,\lambda},
 \qquad P=\prod_{j=1}^nP_j.
\]
Since $a_j\notin\Dclosed(\lambda,r)$, its image $D_{j,\lambda}$ under the
M\"obius map $z\mapsto1/(z-a_j)$ is a closed Euclidean disc not containing
zero, and these discs are pairwise disjoint. Moreover,
\[
 \sup_{w\in D_{j,\lambda}}|w|
 \le\frac1{|\lambda-a_j|-r}\longrightarrow0
 \qquad (|\lambda|\longrightarrow\infty).
\]
Since $P_j=J_{a_j}(\PP^1\setminus U_j)$, Lemma~\ref{lem:inversion} shows
that $P_j$ is compact and polynomially convex.

Each disc $D_{j,\lambda}$ is separated from $P_j\setminus D_{j,\lambda}$ by a
positive distance. Indeed, its distance from zero is positive, so all but
finitely many of the other discs lie in a sufficiently small neighborhood of
zero, while the remaining finitely many compact discs are disjoint from
$D_{j,\lambda}$ and hence at positive distance from it. In particular,
$D_{j,\lambda}$ is an isolated component of $P_j$.

The product $P$ is polynomially convex by the coordinate separation argument
used in the proof of Theorem~\ref{thm:slabs}.

Since $I_a(E)\subset P$ with $P$ compact and polynomially convex, $K$ is compact
and $\widehat K\subset\widehat P=P$, where $\widehat K$ denotes the polynomial
hull. If $I_a(z^\nu)\to w\in(\C^*)^n$ with $z^\nu\in E$, then
\[
 z^\nu\longrightarrow a+1/w
   =\left(a_j+\frac1{w_j}\right)_{j=1}^n.
\]
Closedness of $E$ proves \eqref{eq:torus-intersection}.

\smallskip\noindent\emph{Step 2: boundary strata and their projections.}
For a nonempty set $I\subset\{1,\ldots,n\}$ and a tuple
$\lambda_I\in\prod_{i\in I}\Lambda_i$, write
\[
 B_{I,\lambda_I}
 =\left\{z_I\in\C^I:\sum_{i\in I}|z_i-\lambda_i|^2\le r^2\right\},
 \qquad
 Q_{I,\lambda_I}=I_{a,I}(B_{I,\lambda_I}),
\]
where $I_{a,I}$ denotes the coordinatewise inversion restricted to the
coordinates in $I$. Every coordinate of every point of $Q_{I,\lambda_I}$ is
nonzero.

For $w\in P\setminus\{0\}$, take $I=\{i:w_i\ne0\}$ and denote by $\lambda_i$
the unique label with $w_i\in D_{i,\lambda_i}$. Then
\begin{equation}\label{eq:stratum}
 w\in K\quad\Longleftrightarrow\quad w_I\in Q_{I,\lambda_I}.
\end{equation}
For the forward implication, choose $z^\nu\in\B(\lambda^\nu,r)$ with
$I_a(z^\nu)\to w$. Isolation of $D_{i,\lambda_i}$ forces
$\lambda_i^\nu=\lambda_i$ for all sufficiently large $\nu$ and for each $i\in I$,
and since $I$ is finite these identities hold simultaneously for large $\nu$.
Passing to the limit in
\[
 \sum_{i\in I}|z_i^\nu-\lambda_i|^2\le r^2
\]
gives $a_I+1/w_I\in B_{I,\lambda_I}$.

Conversely, suppose that $w_I\in Q_{I,\lambda_I}$ and keep $z_I=a_I+1/w_I$
fixed. For each $j\notin I$, choose $\lambda_j^\nu\in\Lambda_j$ with
$|\lambda_j^\nu|\to\infty$ and set $z_j^\nu=\lambda_j^\nu$. The center
$(\lambda_I,\lambda_{I^c}^\nu)$ belongs to the full product lattice, and
\[
 |(z_I,z_{I^c}^\nu)-(\lambda_I,\lambda_{I^c}^\nu)|^2
   =\sum_{i\in I}|z_i-\lambda_i|^2\le r^2.
\]
Thus $(z_I,z_{I^c}^\nu)\in E$, and its inversion tends to $w$. When $I$ is the
full index set the constant sequence suffices, and choosing all coordinates to
be lattice centers tending to infinity likewise shows that $0\in K$.

Besides \eqref{eq:stratum} we shall need a projection identity. For every
nonempty $I$ and every tuple $\lambda_I$, the compact subset
\[
 K_{I,\lambda_I}
   =\{w\in K:w_i\in D_{i,\lambda_i}\text{ for every }i\in I\}
\]
satisfies
\begin{equation}\label{eq:projection}
 \pr_I(K_{I,\lambda_I})=Q_{I,\lambda_I}.
\end{equation}
Indeed, for any $w\in K_{I,\lambda_I}$ the stabilization and limiting argument
above, applied in the coordinates of $I$, gives $w_I\in Q_{I,\lambda_I}$
whatever the remaining coordinates are. Conversely, if $q\in Q_{I,\lambda_I}$,
then by \eqref{eq:stratum} the point with coordinates $q$ on $I$ and zero on
$I^c$ belongs to $K$, and since $q_i\in D_{i,\lambda_i}$ for every $i\in I$,
this point lies in $K_{I,\lambda_I}$. Both inclusions in \eqref{eq:projection}
follow.

\smallskip\noindent\emph{Step 3: polynomial convexity of each stratum.}
The set $Q_{I,\lambda_I}$ is contained in the polynomially convex product of discs
\[
 D_I=\prod_{i\in I}D_{i,\lambda_i}\subset(\C^*)^I,
\]
so a point outside $D_I$ is already separated from $Q_{I,\lambda_I}$ by a
polynomial. Suppose then that $p\in D_I\setminus Q_{I,\lambda_I}$. Then
$a_I+1/p$ lies outside the compact convex ball $B_{I,\lambda_I}$, and since a
compact convex set is polynomially convex, some polynomial $q$ satisfies
\[
 |q(a_I+1/p)|>\sup_{z_I\in B_{I,\lambda_I}}|q(z_I)|.
\]
The function $h(w_I)=q(a_I+1/w_I)$ is holomorphic on a neighborhood of $D_I$,
because $D_I$ avoids every coordinate hyperplane. Oka--Weil approximation on
$D_I$ therefore produces polynomial approximants to $h$, uniformly on $D_I$,
and an error below a third of the gap in the inequality above preserves the
separation of $p$ from $Q_{I,\lambda_I}$. Hence
$Q_{I,\lambda_I}$ is polynomially convex.

\smallskip\noindent\emph{Step 4: localization of the polynomial hull.}
Fix $p\in\widehat K\subset P$. The case $p=0$ was settled above, so assume
$p\ne0$. Write $I=\{i:p_i\ne0\}$ for the set of its nonzero coordinates, and
denote by $\lambda_i$ the unique label with
$p_i\in D_{i,\lambda_i}$. For $i\in I$,
the function $h_i$ on $P_i$ that equals $1$ on $D_{i,\lambda_i}$ and $0$ on its
complement extends holomorphically to a neighborhood of $P_i$: it suffices to
take disjoint open neighborhoods of these two compact sets and to prescribe the
corresponding constant values on them. Oka--Weil approximation then supplies
one-variable polynomials $h_{i,\nu}$ converging uniformly to $h_i$ on $P_i$, so
the polynomials
\[
 H_\nu(w)=\prod_{i\in I}h_{i,\nu}(w_i)
\]
converge uniformly on $P$ to $h(w)=\prod_{i\in I}h_i(w_i)$. This limit equals
$1$ at $p$, equals $1$ on $K_{I,\lambda_I}$, and vanishes on
$K\setminus K_{I,\lambda_I}$.

For any polynomial $Q$ in the coordinates indexed by $I$, the hull inequality
gives
\[
 |H_\nu(p)Q(p_I)|\le\sup_{w\in K}|H_\nu(w)Q(w_I)|.
\]
Uniform convergence on the compact set $P$, followed by \eqref{eq:projection},
yields
\[
 |Q(p_I)|\le\sup_{w\in K_{I,\lambda_I}}|Q(w_I)|
             =\sup_{q\in Q_{I,\lambda_I}}|Q(q)|.
\]
Since $Q_{I,\lambda_I}$ is polynomially convex, the inequality above gives
$p_I\in Q_{I,\lambda_I}$, and \eqref{eq:stratum} then gives $p\in K$. Hence
$\widehat K=K$.
\end{proof}

The lemma is the only application-specific convexity ingredient in
Theorem~\ref{thm:balls}. The functions $H_\nu$ in the last step do not further
approximate the obstacle; they localize the hull inequality to the unique block
$K_{I,\lambda_I}$ selected by the nonzero coordinates of $p$.

\subsection{Oka localization}

\begin{proof}[Proof of Theorem~\ref{thm:balls}]
For every $a\in\prod_jU_j$, Lemma~\ref{lem:compactification} provides a compact
polynomially convex set $K_a=\overline{I_a(E)}$ together with a biholomorphism
\[
 I_a:\Omega\cap Z_a\longrightarrow(\C^*)^n\setminus K_a.
\]
Theorem~\ref{thm:general} therefore applies to each $\Omega\cap Z_a$. Choose two
distinct points $a_{j,0},a_{j,1}\in U_j$ for each $j$. As in the proof of
Theorem~\ref{thm:slabs}, the $2^n$ sets
\[
 W_\varepsilon=\Omega\setminus\bigcup_{j=1}^n\{z_j=a_{j,\varepsilon_j}\},
 \qquad\varepsilon\in\{0,1\}^n,
\]
form an analytic-Zariski open cover of $\Omega$.

The nonempty product $\prod_jU_j$ lies in $\Omega$. Given two points of
$\Omega$, their positive distances from the closed set $E$ allow us to enlarge
the radius to some $s$ with $r<s<\ell/2$ while keeping both points outside all
the enlarged balls. A segment joining the two points meets only finitely many
such balls, and replacing each portion inside one of them by a path on its
boundary sphere yields a path in $\Omega$, since these spheres are path
connected and avoid every original ball. The analytic-Zariski open cover above
is therefore a cover by Oka domains, and Kusakabe's localization
theorem~\cite[Theorem 1.4]{Kus} makes $\Omega$ Oka.
\end{proof}

\section{Products of elliptic curves}\label{sec:elliptic}

\subsection{Product holes}

This theorem is the quotient form of Theorem~\ref{thm:slabs}. Under the product
universal covering, the full inverse image of the product-hole domain is exactly
a slab cross; connectedness of the planar lifts is the precise topological
hypothesis that makes the reduction work.

\begin{theorem}
\label{thm:elliptic-product-holes}
Suppose $n\ge2$. For $1\le j\le n$, suppose
$E_j=\mathbb C/\Lambda_j$ is an elliptic curve with quotient map
$\pi_j:\mathbb C\to E_j$, and $D_j\subset E_j$ is a nonempty connected open
set whose full inverse image $U_j=\pi_j^{-1}(D_j)$ is connected. Then
\[
 X=\bigcup_{j=1}^n
    \bigl(E_1\times\cdots\times E_{j-1}\times D_j
                 \times E_{j+1}\times\cdots\times E_n\bigr)
\]
is Oka. Equivalently, writing $K_j=E_j\setminus D_j$, the domain
\[
 (E_1\times\cdots\times E_n)
       \setminus(K_1\times\cdots\times K_n)
\]
is Oka.
\end{theorem}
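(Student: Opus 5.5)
We plan to pass to the universal covering and reduce the statement to Theorem~\ref{thm:slabs}. Let $\pi=\pi_1\times\cdots\times\pi_n:\C^n\to T=E_1\times\cdots\times E_n$ be the product quotient map. It is an unramified holomorphic covering with deck group $\Lambda=\Lambda_1\times\cdots\times\Lambda_n$.

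\emph{Identifying the lift.} A point $z\in\C^n$ satisfies $\pi(z)\in X$ exactly when $\pi_j(z_j)\in D_j$ for some $j$, that is, when $z_j\in U_j$ for some $j$. Hence
\[
 \pi^{-1}(X)=\bigcup_{j=1}^n\C^{j-1}\times U_j\times\C^{n-j}=\mathcal S(U_1,\ldots,U_n).
\]
Each $U_j$ is open, since $\pi_j$ is continuous. Each $U_j$ is nonempty, since $D_j\ne\varnothing$ and $\pi_j$ is surjective. By hypothesis each $U_j$ is connected. Theorem~\ref{thm:slabs} therefore shows that $\pi^{-1}(X)$ is Oka.

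\emph{Descending to $X$.} The restriction $\pi:\pi^{-1}(X)\to X$ is again an unramified holomorphic covering, because the preimage of $X$ is saturated under the deck group. We then invoke covering invariance of the Oka property~\cite[Proposition 2.8]{F13}, exactly as at the end of the proof of Theorem~\ref{thm:general}. The covering space is connected (Theorem~\ref{thm:slabs} applies to it, and slab crosses are connected), so $X$ is connected and Oka.

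\emph{The equivalent formulation.} By De~Morgan, a point lies outside $X$ exactly when every coordinate lies in $K_j=E_j\setminus D_j$. So $X=T\setminus(K_1\times\cdots\times K_n)$, and the two formulations coincide.

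The only step requiring care is the connectedness hypothesis on $U_j$. Connectedness of $D_j$ alone does not suffice: for instance, a thin strip around a closed geodesic lifts to disjoint parallel strips. This is precisely why the theorem assumes that the full inverse images are connected, so that Theorem~\ref{thm:slabs}, through Lemma~\ref{lem:inversion}, applies. Once the lift is identified, no further obstacle remains.
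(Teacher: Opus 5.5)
Your proposal is correct and is essentially the paper's proof. Like the paper, you identify $\pi^{-1}(X)$ with the slab cross $\mathcal S(U_1,\ldots,U_n)$, apply Theorem~\ref{thm:slabs}, and descend to the connected image $X$ by covering invariance of the Oka property~\cite[Proposition 2.8]{F13}; your De~Morgan check of the equivalent formulation and your strip example are correct additions.
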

\begin{proof}
The product universal covering $\pi=\pi_1\times\cdots\times\pi_n$ has full
inverse image
\[
 \pi^{-1}(X)=\mathcal S(U_1,\ldots,U_n).
\]
Each $U_j$ is a nonempty connected planar domain, so
Theorem~\ref{thm:slabs} makes the inverse image Oka. Its image $X$ is connected,
and the restriction of $\pi$ is an unramified holomorphic covering onto $X$.
By the covering invariance of the Oka property~\cite[Proposition 2.8]{F13},
$X$ is Oka.
\end{proof}

The connected-lift hypothesis can be tested topologically: for a nonempty
connected open set $D\subset E$, the full lift to $\mathbb C$ is
connected if and only if
\[
 \pi_1(D)\longrightarrow\pi_1(E)=\Lambda
\]
is surjective. Indeed, the components of the full lift are indexed by the
cosets of the image subgroup in the deck group $\Lambda$, so the lift is
connected exactly when the image is all of $\Lambda$.

\begin{corollary}\label{cor:closed-disc-product-holes}
Suppose $n\ge2$ and $E_1,\ldots,E_n$ are elliptic curves. If each
$K_j\subset E_j$ is a closed embedded disc, then
\[
 (E_1\times\cdots\times E_n)\setminus(K_1\times\cdots\times K_n)
\]
is Oka. The same conclusion holds when each $K_j$ is a finite union of pairwise disjoint closed embedded discs.
\end{corollary}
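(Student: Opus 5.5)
The plan is to reduce Corollary~\ref{cor:closed-disc-product-holes} to Theorem~\ref{thm:elliptic-product-holes} with $D_j=E_j\setminus K_j$. Two conditions must be checked for each $j$. First, $D_j$ is a nonempty connected open set. Second, its full lift $U_j=\pi_j^{-1}(D_j)$ is connected, which by the criterion stated after the theorem amounts to surjectivity of $\pi_1(D_j)\to\pi_1(E_j)=\Lambda_j$. Openness is immediate from the compactness of $K_j$. Nonemptiness holds because a finite union of disjoint closed embedded discs is a proper subset of the closed surface $E_j$, since a torus is not a finite union of discs.

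For connectedness and surjectivity, I would treat one disc first. A closed embedded disc $K\subset E$ is, by the Schoenflies theorem (or directly, if ``closed embedded disc'' is read as locally flat, as in the smooth or tame setting), contained in a slightly larger open disc $\Delta'$. Moreover, $E\setminus K$ deformation retracts onto $E\setminus\Delta'$, which is a torus with an open disc removed. So $E\setminus K$ is connected, and the inclusion $E\setminus K\hookrightarrow E$ is surjective on $\pi_1$. To see the surjectivity concretely, choose two simple closed curves generating $\pi_1(E)=\Lambda$, representing a basis of the lattice, and isotope them off $\Delta'$. This is possible because a disc can be pushed, via an ambient isotopy of $E$, into a small coordinate disc disjoint from two fixed generating curves, such as the images of the two lattice-basis segments through a base point away from the disc.

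For finitely many pairwise disjoint closed discs $K^{(1)},\dots,K^{(m)}$, I would choose pairwise disjoint slightly larger open discs around them. By a further ambient isotopy, which uses the homogeneity of a connected surface under isotopy for finitely many disjoint discs, all of them can be moved into one small coordinate chart disjoint from the two generating curves. The complement of finitely many disjoint closed discs in a connected surface is connected, and it still contains the two generating loops, so $\pi_1(D_j)\to\Lambda_j$ is onto. The covering-space criterion then gives that $U_j$ is connected, and Theorem~\ref{thm:elliptic-product-holes} yields the Oka property.

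The main obstacle is the topological step: justifying that an arbitrary closed embedded disc (or disjoint family of discs) can be isotoped off a pair of generating loops. I would rely on the Schoenflies theorem together with the disc theorem, that any two orientation-compatible embeddings of finitely many disjoint discs into a connected surface are ambiently isotopic, and note that surjectivity on $\pi_1$ is invariant under ambient homeomorphism isotopic to the identity.
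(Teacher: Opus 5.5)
Your proposal is correct. The reduction to Theorem~\ref{thm:elliptic-product-holes} through the $\pi_1$-surjectivity criterion matches the paper's, but the topological verification takes a different route.

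\textbf{The paper's route.} It uses Schoenflies only to produce exterior collars, and enlarges the discs to pairwise disjoint closed discs $B^+$. It proves $D_j$ connected by rerouting a path along the enlarged boundary circles. Surjectivity then comes from van Kampen: the compact surface $M_j=E_j\setminus\bigcup\operatorname{int}B^+$ becomes $E_j$ after filling its boundary circles with discs. So $\pi_1(M_j)\to\pi_1(E_j)$ is onto, and this map factors through $\pi_1(D_j)$.

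\textbf{Your route.} You invoke the disc theorem, i.e.\ the homogeneity of finitely many disjoint tame discs in a connected surface. This moves the enlarged discs into a small coordinate chart away from two fixed lattice-basis loops. Connectedness and surjectivity are then visible in a standard model.

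\textbf{Comparison.} Your approach gives a concrete normal form for $D_j$, and it treats nonemptiness explicitly, which the paper leaves implicit. The price is a heavier input: the topological disc theorem for embedded discs in a surface, which are necessarily tame in dimension two. The paper needs nothing beyond exterior collars and the effect on $\pi_1$ of attaching $2$-cells.

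\textbf{A minor simplification.} You only need some self-homeomorphism of $E_j$ carrying the discs into the chart, since any homeomorphism induces an isomorphism of $\pi_1(E_j)$. Isotopy to the identity and orientation compatibility play no role.
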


Here a closed embedded disc means the image of the closed unit disc under a
topological embedding into $E_j$; no boundary regularity is assumed.
The point is that the connected-lift hypothesis in
Theorem~\ref{thm:elliptic-product-holes} becomes a concrete fundamental-group
test for disc complements.

\begin{proof}[Proof of Corollary~\ref{cor:closed-disc-product-holes}]
Fix two points of $D_j=E_j\setminus K_j$. By the Jordan--Schoenflies theorem
for surfaces, the boundary of each embedded disc $B$ has an exterior collar:
there is an embedding
\[
 c_B:\partial B\times[0,\varepsilon_B]\longrightarrow
 E_j\setminus\operatorname{int}B,\qquad c_B(x,0)=x,
\]
whose image meets $B$ precisely in $\partial B$. Choose pairwise disjoint
open neighborhoods of the finitely many discs that avoid the two fixed
points. By shortening the collars, their images lie in these neighborhoods.
Attaching each collar to its disc gives a closed embedded disc
\[
 B^+=B\cup c_B(\partial B\times[0,\varepsilon_B]),
 \qquad B\subset\operatorname{int}B^+.
\]
Thus the enlarged discs are pairwise disjoint and avoid both points.
Rerouting a path between the points along the enlarged boundary circles
shows that $D_j$ is connected.

Write $M_j\subset D_j$ for the connected compact
surface obtained by removing the interiors of the enlarged discs. Filling in its
boundary circles with discs recovers $E_j$, so van Kampen's theorem gives a
surjection $\pi_1(M_j)\to\pi_1(E_j)$. The inclusions $M_j\subset D_j\subset E_j$
factor this surjection through $\pi_1(D_j)$, whose image is therefore all of
$\pi_1(E_j)$. The covering-space criterion above now shows that each
$U_j=\pi_j^{-1}(D_j)$ is connected, and
Theorem~\ref{thm:elliptic-product-holes} applies.
\end{proof}

Theorem~\ref{thm:elliptic-product-holes} also applies when each compact $K_j$
lies in a coordinate disc and $E_j\setminus K_j$ is connected, since the two
generators of $\pi_1(E_j)$ can then be represented outside a slightly larger
disc. Containment in a disc without connectedness is insufficient for this
argument, since a simple closed curve separates off an interior component.

\subsection{A closed metric ball}

For the flat product torus $T$ and the lattices fixed in the Introduction, the
full inverse image of a closed ball whose radius is below the injectivity radius
is exactly the periodic obstacle of Theorem~\ref{thm:balls}.

\begin{proof}[Proof of Corollary~\ref{cor:torus}]
By translation we may take $c=0$, and denote the quotient map by
$\pi:\C^n\to T$. Two points of $\B(0,r)$ differ by a vector of length at most
$2r<\ell$, so
their difference cannot be a nonzero lattice vector; the same argument applies
to a slightly larger radius. Hence $\pi$ embeds a neighborhood of $\B(0,r)$,
and $\pi(\B(0,r))=\B_T(0,r)$. Moreover,
\[
 \pi^{-1}(\B_T(0,r))
 =\pi^{-1}(\pi(\B(0,r)))
 =\bigcup_{\lambda\in\Lambda}\B(\lambda,r)=E.
\]
Thus $\pi$ restricts to an unramified holomorphic covering from the Oka manifold
$\Omega$ onto $T\setminus\B_T(0,r)$. The covering invariance of the Oka
property~\cite[Proposition 2.8]{F13} therefore implies that
$T\setminus\B_T(0,r)$ is Oka as well.
\end{proof}

\appendix
\section{An explicit two-dimensional fixed model}\label{app:model}
This appendix supplies the single fixed Fatou--Bieberbach model used in
\eqref{eq:productmodel}. Composing two explicit shears with an affine
contraction gives an automorphism that maps the lower quadrant into its
interior and fixes an attracting point outside it; its attracting basin avoids
the obstacle and is biholomorphic to $\C^2$.

Write $E_0^{(2)}=\{\im z_1\le0,\im z_2\le0\}$ and $p=(i,i)$. The polynomials
\[
 Q_1(\xi)=3(\xi/e)^5(6-5\xi/e),\qquad
 Q_2(\xi)=3(\xi/e^4)(2-\xi/e^4)
\]
take the value $3$ at $e$ and $e^4$, respectively, and their derivatives vanish
at those points. Moreover
\begin{align*}
 \sup_{|\xi|\le1}|Q_1(\xi)|
 &\le3e^{-5}(6+5e^{-1})<\tfrac12,\\
 \sup_{|\xi|\le e^{1/2}}|Q_2(\xi)|
 &\le3e^{-7/2}(2+e^{-7/2})<\tfrac12.
\end{align*}
Consider the shears
\[
 A_1(z)=(z_1+iQ_1(e^{-iz_2}),z_2),\qquad
 A_2(z)=(z_1,z_2+iQ_2(e^{-iz_1})),
\]
\[
 \Psi(z)=\tfrac12(A_2\circ A_1)(z)-i(1,1).
\]
Each shear, and therefore $\Psi$, is an entire automorphism. The successive
images of $p$ under the two shears are $(4i,i)$ and $(4i,4i)$, so $\Psi(p)=p$
and $D\Psi(p)=\frac12I_2$. For $z\in E_0^{(2)}$, the first shear gives
$\im(A_1z)_1<1/2$ and leaves the second coordinate unchanged, while the second
polynomial is evaluated at modulus at most $e^{1/2}$, so its output has
$\im(A_2A_1z)_2<1/2$ without changing the first coordinate. Hence
\[
 \Psi(E_0^{(2)})\subset\{\im z_1<-3/4,\ \im z_2<-3/4\}
 \subset E_0^{(2)}.
\]
The attracting basin of $p$ avoids this closed forward-invariant set: the
iterates of a point of that set remain in it, whereas those of a basin point
converge to $p$.

The eigenvalues of $D\Psi(p)$ are both $1/2$, so
$|1/2|^2<|1/2|<1$. Rosay--Rudin's attracting-basin theorem
\cite[Theorem 9.1]{RR} therefore gives a biholomorphism from the basin $B_2$ of
$p$ onto $\C^2$. Write $\psi(u)=\Psi(p+u)-p$. In these translated coordinates,
the theorem's map is
$H=\lim_{N\to\infty}2^N\psi^N$, with $H(0)=0$ and $DH(0)=I_2$.
Thus its inverse, translated back by $p$, gives
\[
 \Phi_2:\C^2\longrightarrow B_2\subset\C^2\setminus E_0^{(2)},\qquad
 \Phi_2(0)=p,\quad D\Phi_2(0)=I_2,
\]
and this is the biholomorphism used in \eqref{eq:productmodel}.

\section*{Funding}
Partial support for S.-Y.~Xie came from the National Key R\&D Program of China through grants 2023YFA1010500 and 2021YFA1003100, from the National Natural Science Foundation of China through grants 12288201 and 12471081, and from the Xiaomi Young Talents Program.

\section*{AI use statement}
The authors developed the mathematical ideas and arguments. AI assistance was used for manuscript preparation. Responsibility for reviewing and approving all content rests with the authors.

\bigskip
\begingroup
\small
\noindent\textsc{(Yun-Heng Du) Academy of Mathematics and Systems Science, Chinese
Academy of Sciences, Beijing 100190, China.}\par
\noindent Email address: \href{mailto:duyunheng@amss.ac.cn}{\nolinkurl{duyunheng@amss.ac.cn}}\par
\medskip
\noindent\textsc{(Bin Guo) Academy of Mathematics and Systems Science, Chinese
Academy of Sciences, Beijing 100190, China.}\par
\noindent Email address: \href{mailto:guobin181@mails.ucas.ac.cn}{\nolinkurl{guobin181@mails.ucas.ac.cn}}\par
\medskip
\noindent\textsc{(Peng-Chao Wang) Academy of Mathematics and Systems Science, Chinese
Academy of Sciences, Beijing 100190, China.}\par
\noindent Email address: \href{mailto:wangpengchao26@mails.ucas.ac.cn}{\nolinkurl{wangpengchao26@mails.ucas.ac.cn}}\par
\medskip
\noindent\textsc{(Song-Yan Xie) State Key Laboratory of Mathematical Sciences,
Academy of Mathematics and Systems Science, Chinese Academy of Sciences,
Beijing 100190, China; School of Mathematical Sciences, University of Chinese
Academy of Sciences, Beijing 100049, China.}\par
\noindent Email address: \href{mailto:xiesongyan@amss.ac.cn}{\nolinkurl{xiesongyan@amss.ac.cn}}\par
\endgroup
\end{document}